\DeclareMathOperator{\cof}{\cof}
\newcommand{\bn}{\mathbb{N}}
\theoremstyle{plain}
\newtheorem{lemma}{Lemma}[section]
\newtheorem{example}{Example}[section]
\newtheorem{remark}{Remark}[section]
\newcommand{\Z}{\mathbb{Z}}
\newcommand{\N}{\mathbb{N}}
\theoremstyle{plain}
\newtheorem{thm}{Theorem}
\newtheorem{lem}[thm]{Lemma}
\newtheorem{cor}[thm]{Corollary}
\newtheorem{conj}[thm]{Conjecture}
\newtheorem{prop}[thm]{Proposition}
\newtheorem{qn}[thm]{Question}
\theoremstyle{definition}
\newtheorem{defn}[thm]{Definition}
\theoremstyle{remark}
\numberwithin{equation}{section}
\numberwithin{thm}{section}
\begin{document}

\title[On finite spacer rank for words and subshifts] {On finite spacer rank for words and subshifts}

\author[Gao]{Su Gao}
\address[Su Gao]{\\ School of Mathematical Sciences and LPMC, Nankai University, Tianjin 300071, P. R. China}
\email{sgao@nankai.edu.cn}

\author[Jacoby]{Liza Jacoby}
\address[Liza Jacoby]{ \\
     Department of Mathematics\\ UC Berkeley, Berkeley, CA 94720, USA}
\email{liza@math.berkeley.edu}

\author[Johnson]{William Johnson}
\address[William Johnson]{ \\
     Department of Mathematics \\ University of Maine, Orono, ME 04469, USA}
\email{william.l.johnson@maine.edu}

\author[Leng]{James Leng}
\address[James Leng]{\\ Department of Mathematics\\
    UCLA, Los Angeles, CA 90095, USA.}
\email{jamesleng@math.ucla.edu}

\author[Li]{Ruiwen Li}
\address[Ruiwen Li]{\\ School of Mathematical Sciences and LPMC, Nankai University, Tianjin 300071, P. R. China}
\email{rwli@mail.nankai.edu.cn}

\author[Silva]{Cesar E. Silva}
\address[Cesar E. Silva]{\\ Department of Mathematics\\
     Williams College \\ Williamstown, MA 01267, USA}
\email{csilva@williams.edu}

\author[Wu]{Yuxin Wu}
\address[Yuxin Wu ]{\\
     Management Science \& Engineering\\
Huang Engineering Center\\ Stanford University\\
475 Via Ortega\\
Stanford, CA 94305, USA}
\email{yuxinwu@stanford.edu}

\subjclass[2010]{Primary 37B02, 37B10; Secondary
37A99, 
37A50} 
\keywords{Symbolic shifts, rank one subshift, finite spacer rank subshift, Morse sequence, Sturmian sequences}

\date{\today}

\begin{abstract}  
We define a notion of rank for words and subshifts that we call spacer rank, extending the notion of rank-one symbolic shifts of Gao and Hill. We construct infinite words of each finite spacer rank, of unbounded spacer rank, and show there exist words that do not have a spacer rank construction. 
We consider words that are fixed points of substitutions and give explicit conditions for the word to have an at most spacer rank two construction, and not to be rank one. We prove that finite spacer rank subshifts have topological entropy zero, and that there are zero entropy subshifts not defined by a word with a finite spacer rank construction.  We also study shift systems associated with infinite words, including those associated to Sturmian sequences, which we show are spacer rank-two systems. 
\end{abstract}

 \maketitle
	 
\section{Introduction} Rank-one measure-preserving transformations have played an important role in ergodic theory since the pioneering work of Chac\'on~\cite{Ch67,Ch69}. The terminology ``rank one'' comes from rank-one cutting and stacking systems \cite{Ch69, ORW82}. As shown by Kalikow \cite{Ka84}, one can encode cutting and stacking systems as a shift on a symbolic system; he also shows that the two systems are measurably isomorphic when the symbolic sequence is aperiodic. Symbolic models for measure-preserving transformations have also been introduced for higher finite rank cases, and have been used extensively in ergodic theory; Ferenczi \cite{Fe97} is a comprehensive survey of these results, and we also refer to King \cite{Ki88} and King--Thouvenot \cite{KiTh91}.

Motivated by these notions, researchers began to consider problems about symbolic systems as topological dynamical systems. In \cite{Bo13}, Bourgain studied a class of rank-one symbolic shifts. Rank-one symbolic shifts are also considered in \cite{AdFePe17,AbLeRu14,Da19}.
It was in \cite{GaHi16AT} that Gao and Hill started a systematic study of (non-finite) rank-one subshifts as topological dynamical 
systems, and proved several properties for them. In this paper we generalize this study to higher rank subshifts. We note that in \cite{DM08} Downanrowicz and Maass proposed a notion of topological rank for topological systems, that applies to subshifts but it is different from the notion we study as we mention below. 

We begin by considering one-sided infinite (binary) words and define a notion of rank for them that we call spacer rank, generalizing the definition of rank one from \cite{GaHi16AT}. In Section~\ref{S:wordrank} we prove some basic results about this notion of rank for infinite words. In fact, for each $n\geq 1$ we define a notion of spacer rank $n$ for subshifts, and a notion for infintie words that we refer to as having a \textit{spacer rank-$n$ construction}.
We also define a more restrictive notion called a \textit{proper spacer rank-$n$ construction} and elucidate what it means for a word to have the proper condition as opposed to the standard spacer rank-$n$ construction (for rank one the proper condition does not introduce any restrictions).
For each $n\geq 1$ we construct words with a proper spacer rank-$n$ construction. (By our definition, a word that has a (proper) spacer rank-$(n+1)$ construction does not have a (proper) spacer rank-$n$ construction.) We also define related notions of having a spacer rank construction and having unbounded spacer rank.
We construct words of unbounded spacer rank and show that there exist words, such as full complexity words, that do not admit any spacer rank construction.

In Section~\ref{S:sub} we consider a natural class of infinite words that turn out to have an at most spacer rank-two construction, namely substitution sequences with the alphabet $\{0, 1\}$. Generalized Morse sequences are examples of such words. 

Starting from Section~\ref{S:finiterank} we consider finite spacer rank subshifts as topological dynamical systems. More specifically, we consider subshifts defined by a one-sided infinite word and define the notion of spacer rank for such systems.
These subshifts are shift spaces in the sense of \cite{LiMa95}, but we show that the converse is not true, i.e., that there exist subshifts not arising from an infinite word.
Generalizing a well-known result for rank-one subshifts, we prove that all finite spacer rank subshifts have zero topological entropy. In contrast, there are words not of full complexity, in fact of polynomial complexity, that do not admit a spacer rank construction. Thus our notion of spacer rank provides a refined hierarchy for zero-entropy words and systems.

It is worth noting that for (symbolic) subshifts our notion of spacer rank is different from topological rank. For example, it is well-known that topological rank one maps are equicontinuous \cite[Theorem 6.3.6]{DuPe22}, while it was shown in \cite{GaZi18} that the maximal equicontinuous factor of a rank-one subshift is finite. (The notion of rank one and spacer rank one coincide.) As another example, the Morse system has topological rank three \cite[Example 6.3.8]{DuPe22}, while as shown later it has spacer rank two.
On the other hand, Sturmian systems have topological rank two \cite[Corollary 7.2.4]{DuPe22} and we show that they also have spacer rank two.

In Sections~\ref{S:sturmian} and \ref{S:charrank2} we consider more examples of spacer rank-two words and systems. In Section~\ref{S:sturmian} we prove that all Sturmian words have a proper spacer rank-two construction, and that subshifts generated by Sturmian words have spacer rank two (in particular they do not have (spacer) rank one). In Section~\ref{S:charrank2} we give additional examples of spacer rank-two systems and give a characterization of when a subshift generated by a spacer rank-two word has spacer rank two as a topological dynamical system. We end with a spacer rank-two system that has at least four orbit closures, in contrast to rank-one systems that have at most two orbit closures; hence this system cannot be topologically isomorphic to a rank-one system.

There are many questions regarding finite spacer rank words and subshifts that are left open by this paper. We hope that our results here will stimulate more research on this topic.

\smallskip

\textbf{Acknowledgments:} This work started as the 2019 undergraduate thesis of Y.W. under the direction of C.S., and contains part of the 21--22 undergraduate theses of L.J and W.J. and the 22--23 undergraduate thesis of R.L. In summer 2019, J.L. worked on this project as a research assistant at Williams College. S.G. acknowledges the National Natural Science Foundation of China (NSFC)
grants 12250710128 and 12271263 for partial support of his research.
From August 2019 to August 2021, C.S. served as a Program Director in the Division of Mathematical Sciences at the National Science Foundation (NSF), USA, and as a component of this job, he received support from NSF for research, which included work on this paper. Any opinion, findings, and conclusions or recommendations expressed in this material are those of the authors and do not necessarily reflect the views of the National Science Foundation. 

We thank Tom Garrity for asking about the rank of Sturmian sequences, and Sumun Iyer for useful suggestions. We would like to thank Houcein el Abdalaoui, Terry Adams, Sam Chistolini, Darren Creutz, Alexandre Danilenko, and Sebastien Ferenczi for comments. We also thank the anonymous referees for comments and suggestions.
 

\section{Spacer Rank Constructions for Words}\label{S:wordrank}

In this section we study spacer rank constructions of one-sided infinite words; we consider symbolic subshift systems generated from infinite words of any rank in Section~\ref{S:finiterank}. We let $\N$ denote the nonnegative integers, $\N^+$ the positive integers, and $\Z$ the set of integers.   

We start by extending to arbitrary finite rank the definition of rank-one words given by Gao and Hill in \cite{GaHi16B}. All words that we consider in this article are over the binary alphabet $\{0,1\}$. A \textbf{finite word} is an element of $\bigcup_{n=1}^\infty \{0,1\}^n$; if $w\in \{0,1\}^n$ we say $w$ has \textbf{length} $n$. A \textbf{word}, or \textbf{infinite word} is an element of $\{0,1\}^{\N}$, and a \textbf{bi-infinite word} is an element of $\{0,1\}^\Z$.
A \textbf{finite subword}, or a \textbf{factor}, of a word $V\in\{0,1\}^{\N}$ is a finite word of the form $V(i)V(i+1)\cdots V(i+k)$ for some $i,k\in\N$. If $u,v$ are finite words then $uv$ consists of the finite word $u$ followed by the finite word $v$ (i.e., $uv(i)=u(i)$ for $i\in\{0,\ldots, |u|-1\}$ and $uv(|u|+i)=v(i)$ for $i\in\{0,\ldots, |v|-1\}$). This notion is extended in a analogous way to $uV$ when $u$ is a finite word and $V$ is an infinite word. When clear from the context we may write \textit{word} instead of \textit{finite word}.

\begin{defn}\label{D:built} Let $\mathcal F$ denote the set of all finite words that start and end with $0$. Let $S$ be a finite subset of $\mathcal F$ and $w$ a finite word. A \textbf{building} of $w$ from $S$ consists of a sequence $(v_1,\ldots,v_k,v_{k+1})$ of elements of $S$ and a sequence $(a_1,\dots,a_k)$ of elements of $\N$ such that 
\[w = v_1 1^{a_1} v_2 1^{a_2} \cdots v_k 1^{a_k} v_{k+1}.\]
We say that \textbf{every word is used} in this building if $\{v_1,\dots,v_{k+1}\}=S$. A finite word $w$ is \textbf{ built from $S$} if there is a building of $w$ form $S$ with sequences $(v_1,\ldots,v_k,v_{k+1})$ in $S$ and $(a_1,\dots,a_k)$ in $\N$.
A finite word $w$ is \textbf{ built from $S$ starting with} $u$ if there is a building of $w$ form $S$ with sequences $(v_1,\ldots,v_k,v_{k+1})$ and $(a_1,\dots,a_k)$ such that $v_1=u$. These notions are extended to infinite words in a similar way.
\end{defn}\

\begin{example} \rm{
Consider the finite word
\[w =  01010101010\]
We first note that $w$ is built from $S=\{0\}$; in fact, every word in $\mathcal F$, and very infinite word starting with 0, is built from $S=\{0\}$. However, $w$ is also built from $S=\{010\}$, though it is not built from, for example, $S=\{00\}$.
The finite word 
\[u =  00101010101010
\]
is not built from any one-element set except $S=\{0\}$, but it is built from the two-element set $S=\{00, 010\}$.

The infinite Chac\'on word (defined in detail in Example~\ref{Chaconex})
\[C= 0010 0010 1 0010 0010 0010 1 0010 1 0010 0010 1 0010 \cdots\]
is built from $S=\{ 0010\}$, among other one-element sets.

}\end{example}

\begin{defn} \label{At Most Rank n Word}
An infinite word $V\in\{ 0,1 \}^\mathbb{N}$ \textbf{has an at most spacer rank-$\mathbf{n}$ construction}, 
if there exists an infinite sequence $(S_i)_{i\in\N}$, where each $S_i=\{v_{i, 1}, v_{i, 2}, \cdots, v_{i, n_i} \}$ is a set in $\mathcal{F}$ of $n_i$ words, with $1\leq n_i\leq n$, that is defined inductively by
\begin{align*}
&v_{0,j} = 0 \text{ for all } 1 \leq j \leq n_0\\
&v_{i+1, 1} \text{ is built from } S_i \text{ starting with } v_{i,1},\\
&v_{i+1, j} \text{ is built from }S_i , \text{ for all } 2 \leq j \leq n_i\leq n,
\end{align*}
and such that $V \upharpoonright |v_{i,1}| = v_{i,1}$ for all $i \in \mathbb{N}$. We then write $V = \lim_{i\rightarrow\infty} v_{i, 1}$.
For $n>1$, an infinite word $V\in\{ 0,1 \}^\mathbb{N}$ has a \textbf{spacer rank}-$\mathbf{n}$ \textbf{construction} if it has an at most spacer rank-$n$ construction and not an at most spacer rank-$(n-1)$ construction, and it is of \textbf{rank one} if it has a spacer rank-one construction. (Spacer rank-one coincides with the standard rank-one notion.) We call the set $S_{i}$ the $i$th \textbf{level} of the construction.
\end{defn}

We note that $V \upharpoonright |v_{i,1}| = v_{i,1}$ implies that $v_{i+1,1} \upharpoonright |v_{i,1}| = v_{i,1}$ for any $i \geq 0$.
Also, from Definition~\ref{D:built} it follows that the length of each $v_{i,j}$ in Definition~\ref{At Most Rank n Word} increases to infinity.
In addition, one can verify that if a word has a spacer rank $n$ construction, then it has a spacer rank-$(n+1)$ construction.

\begin{example} \label{Chaconex}\rm{
A well-known example of a word with a rank-one construction is the Chac\'on sequence or word, see e.g. \cite{Ju77}. We define the \textbf{Chac\'on sequence} $C$ as the limit $\lim_{i\rightarrow\infty} c_{i,1}$ where 

\begin{itemize}
\item[] $c_{0,1} = 0,$
\item[] $c_{i+1,1} = c_{i,1} c_{i,1} 1 c_{i,1}$ for all $i \in\N.$
\end{itemize}

Define the word $D$ by
\[D =  0C
\]
We give a spacer rank-two construction for $D$.
Define
\begin{itemize}
    \item[] $v_{0, 1} = 0$ and $v_{0, 2} = 0,$
     \item[] $v_{1, 1} = v_{0,1}v_{0,2} v_{0,2}1v_{0,2}$ and $v_{1, 2} = v_{0,2}v_{0,2}1v_{0,2}$,
    \item[] $v_{i+1, 1} = v_{i, 1} v_{i,2} 1 v_{i, 2}$ and $v_{i+1, 2} = v_{i, 2}v_{i,2}1v_{i,2}$,{ for }$i\geq 1$.
\end{itemize}
The word $D$ cannot have a rank-one construction. 
Suppose $D$ had a rank-one construction with levels $(w_{i,1})_{i\in\mathbb{N}}$; then $V=\lim_{i\to\infty} w_{i,1}$ and the $1$'s appear in $w_{i,1}$ only as single $1$'s. Consider a sufficiently large $i$ such that $w_{i,1}$ has length greater than 4, so $w_{i,1}=0001u$ for some finite word $u$ that starts and ends with $0$. One can check that there are no occurrences of four consecutive $0$'s in $D$. It follows that for any $j>i$, it must happen that
$$ w_{j,1}=w_{i,1}1w_{i,1}1\cdots 1w_{i,1}. $$
It follows that $D$, so $C$ is eventually periodic (i.e., $C$ is of the form $C=uvvv\cdots$ for some finite words $u,v$), a contradiction. So $D$ is not rank one.
}\end{example}

\begin{example} \rm{
The \textbf{Phouhet-Thue-Morse sequence}, or simply the \textbf{Morse sequence} or \textbf{Morse word}, denoted as $M$, has several definitions. A simple way to construct this sequence is by an inductive process where we start with 0 and then append its complement, so we obtain 01, and continue by appending the complement of 01 to obtain 0110, and then 01101001, etc. (A definition using substitutions is given in Section~\ref{S:sub}.) We now see that the Morse sequence has an at most spacer rank-two construction. Consider the following definition of $M_{i,j}$:
\begin{itemize}
    \item[] $M_{0, 1} = 0$ and $M_{0, 2} = 0;$
     \item[] $M_{i+1, 1} = M_{i, 1} 11 M_{i, 2}$ and $M_{i+1, 2} = M_{i, 2} M_{i, 1}$ if $i$ is even, for $i\geq 0$;
    \item[] $M_{i+1, 1} = M_{i, 1} 1 M_{i, 2}$ and $M_{i+1, 2} = M_{i, 2} 1 M_{i, 1}$ if $i$ is odd, for $i\geq 1$.
   
\end{itemize}

Then it is simple to check that $M \upharpoonright |M_{i,1}| = M_{i,1}$ for all $i \in \mathbb{N}$ and $M = \lim_{i\rightarrow\infty} M_{i, 1}$. Thus $M$ has an at most spacer rank-two construction. It follows from work of from del Junco \cite{Ju77} that the Morse system is not rank one; a different and independent argument is given as a consequence of {Corollary \ref{C:notrankone}}. Thus, the Morse sequence has a spacer rank-two construction.
}\end{example}

\begin{example} \rm{
We give a spacer rank-two construction for the following sequence
\[Q =  00101010\ldots
\]
Define
\begin{itemize}
    \item[] $v_{0, 1} = 0$ and $v_{0, 2} = 0,$
     \item[] $v_{1, 1} = v_{0,1} v_{0,2}$ and $v_{1, 2} = v_{0,2}1v_{0,2}$,
    \item[] $v_{i+1, 1} = v_{i, 1} 1 v_{i, 2}$ and $v_{i+1, 2} = v_{i, 2}1v_{i,2}$,{ for }$i\geq 1$.
\end{itemize}
The word $Q$ cannot have a rank-one construction. If $Q$ had a rank-one construction with levels $(w_{i,1})_{i\in\N}$,  since $V=\lim_{i\to\infty}w_{i,1}$, the word $w_{i,1}$ would have to start with 00, but as there are no other occurrences of 00 in $Q$, $w_{i,1}$ could not build $w_{i+1,1}$, a contradiction. 
}\end{example}

While the words $D$ and $Q$ have a spacer rank-two construction (and are not rank one), one could argue that ``essentially" they have a rank-one construction, and in fact in Section~\ref{S:finiterank} we will  see that $D$ and $Q$ define the same system, which is a rank-one system; however, $M$ does not define a rank-one system. This  motivates the following  Definition \ref{proper rank}.

\begin{defn} \label{proper rank}
An infinite word $V\in\{ 0,1 \}^\mathbb{N}$ \textbf{has a proper spacer rank-$\mathbf{n}$ construction} if it does not have an at most spacer rank-$(n-1)$ construction (when $n>1$) and there exists an infinite sequence $(S_i)_{i\in\N}$, where $S_0=\{0\}$ and for $i>1$  \[S_i=\{v_{i, 1}, v_{i, 2}, \cdots, v_{i, n} \}\] is a set in $\mathcal{F}$ of $n$ words   that is defined inductively by  
\begin{align*}
&v_{0,j} = 0 \text{ for all } 1 \leq j \leq n,\\
&v_{i+1, 1} \text{ is built from }   S_i \text{ starting with }v_{i,1},\\
&v_{i+1, j} \text{ is built from } S_i, \text{ for all } 2 \leq j \leq n.\\
\end{align*}
In addition, every word of $S_i$ is used in the building of $v_{i+1,j}$ { for all } $1 \leq j \leq n$,
and $V \upharpoonright |v_{i,1}| = v_{i,1}$ for all $i \in \mathbb{N}$. 
\end{defn}

Every rank-one construction is a proper rank-one construction. The Morse word $M$ has a proper spacer rank-two construction, while the infinite word $Q$ does not have a proper spacer rank-$n$ construction for any $n \geq 1$. We note that if a word $V$ has a proper spacer rank-$n$ construction, then it is \textbf{recurrent}, i.e.,  every finite subword of $V$ appears in $V$ infinitely often. The word $Q$ is not recurrent as $00$ appears only once. \\

We note that it may happen that a spacer rank construction starts to have the proper spacer rank property after finitely many steps of the construction, or in an infinite subsequence; if this is the case we will typically assume that the proper spacer rank sub-construction has been chosen.


We now give examples, for each $n\geq 1$, of infinite words that have a spacer rank-$n$ construction, and also examples of words that do not have a spacer rank-$n$ construction for any $n \in \bn$. We note that for any infinite word $V$, the word $1V$ does not have a spacer rank-$n$ construction for any $n\geq 1$, though it would not be reasonable to call it of unbounded spacer rank. We thus introduce the following definition.

\begin{defn} \label{unbounded rank}
An infinite word $V\in\{ 0,1 \}^\mathbb{N}$ \textbf{has a spacer rank construction} if there exists a sequence $(n_i)_{i\geq 0}$ and a sequence 
$(S_i)_{i \in \mathbb{N}}$ where each $S_i$ is a finite set of finite words in $\mathcal{F}$ of the form
\[S_i=\{v_{i,1}, v_{i,2},\ldots, v_{i,n_i}\},\] defined inductively by
\begin{align*}
&v_{0,j} = 0 \text{ for all } 1 \leq j \leq n_0,\\
&v_{i+1, 1} \text{ is built from }    S_i \text{ starting with }v_{i,1},\\
&v_{i+1, j} \text{ is built from }  S_i, \text{ for all } 2 \leq j \leq n,
\end{align*}
and such that $V \upharpoonright |v_{i,1}| = v_{i,1}$ for all $i \in \mathbb{N}$. 
A word is said to  have \textbf{unbounded spacer rank} if it has a spacer rank construction but does not have a spacer rank-$n$ construction for any $n\geq 1$.  
\end{defn}

In this definition the cardinality of the sets $S_i$ is not uniformly bounded, so it follows that if a word has a spacer rank-$n$ construction then it has a  spacer rank construction, but there are words that have no  spacer rank construction such as the word $1V$.
We first construct words of proper finite spacer rank.

\begin{prop}\label{P:allfiniterank} For each $n\in\N^+$ there is an infinite word $V$ which has a proper spacer rank-$(n+1)$ construction and no spacer rank-$n$ construction.
\end{prop}

\begin{proof} Let $F_{n,k}$, $k\geq 1$, enumerate all sets of $n$ words $w_1,\dots, w_n$, where each $w_i$ starts and ends with $0$, and $|w_i|>1$. (We allow repetitions in the words $w_1, \dots, w_n$.)
Let $M_{n,k}$ be larger than $|w_1|+\cdots+|w_n|$ if $F_{n,k}=\{w_1,\dots, w_n\}$.

Define a proper spacer rank-$(n+1)$ construction by starting as follows. For $1\leq j\leq n+1$, let
$$ v_{0,j}=0 $$
and
$$ v_{1,j}=01^j0. $$
By induction on $k$, assume that $v_{k,j}$, for $1\leq j\leq n+1$, have been defined. We define $v_{k+1,j}$ for $1\leq j\leq n+1$. Consider two cases:
\begin{quote}
\begin{enumerate}
\item[Case 1.] Every $v_{k, j}$, $1\leq j\leq n+1$, is built from $F_{n,k}$.

    In this case we let
    $$ v_{k+1,j}=v_{k,j} $$
    for all $1\leq j\leq n$.
\item[Case 2.] There is $1\leq j_0\leq n+1$ such that $v_{k,j_0}$ is not built from $F_{n,k}$.

    Let $M$ be larger than $|v_{k,j}|$ for all $1\leq j\leq n+1$ and also larger than $M_{n, k}$. For each $1\leq j\leq n+1$, let
    $$ v_{k+1,j}=u_01^{n_1}u_11^{n_2}\cdots 1^{n_\ell}u_{\ell} $$
    where
    \begin{itemize}
    \item $\ell\geq n$
    \item $u_0=v_{k,j}$
    \item $\{u_p\,:\, 0\leq p\leq \ell\}=\{u_p\,:\, 1\leq p\leq \ell-1\}=\{v_{k,j}\,:\, 1\leq j\leq n+1\}$
    \item for all $1\leq p\leq \ell$, $n_p>M$.
    \end{itemize}
\end{enumerate}
\end{quote}
This finishes  the definition of the word. To finally obtain a proper  spacer rank-$(n+1)$ construction we only need to rearrange the stages of the construction to omit any step where Case 1 happens.

We prove that $V$ does not have an at most spacer rank-$n$ construction. If $V$ had an at most spacer rank-$n$ construction, then  there would exist a $k\geq 1$ such that $F_{n,k}=\{w_1,\dots, w_n\}$ appears in such a construction and we have $|w_1|, \dots, |w_n|> n+3$.

Consider the $k$-th step of the above construction. Suppose first that Case 1 happens, i.e., every $v_{k,j}$, $1\leq j\leq n+1$, is built from $F_{n,k}$. Then each $v_{k,j}$ has some $u_j\in F_{n,k}$ as its initial segment. Note that by our definition, each $v_{k,j}$ also has $v_{1,j}=01^j0$ as its initial segment. Thus, since $|u_j|>n+3$, $u_j$ must have $01^j0$ as its initial segment. This implies that there are at least $n+1$ many distinct elements in $F_{n,k}$, a contradiction.

Thus Case 2 must have happened in the $k$-th step of the above construction, i.e., there is $1\leq j_0\leq n+1$ such that $v_{k,j_0}$ is not built from $F_{n,k}$. Suppose
$$ v_{k+1,1}=v_{k,1}1^{n_1}u_11^{n_2}\cdots 1^{n_{\ell}}u_{\ell}. $$
By our construction there is $1\leq p_0\leq \ell-1$ such that $u_{p_0}=v_{k,j_0}$. Since $v_{k+1,1}$ is an initial segment of $V$ and $V$ is built from $F_{n,k}$, and since $n_p>M$ for all $1\leq p\leq \ell$, it follows that each $u_p$, $1\leq p\leq \ell-1$, must be built from $F_{n,k}$. This contradicts our assumption about $u_{p_0}=v_{k,j_0}$, showing that $V$ does not have a spacer rank-$n$ construction.
\end{proof}

We now construct infinite words starting with 0 with no spacer rank construction. Recall that the \textbf{complexity} of an infinite word $V$, $P_n(V)$, is  defined to be the number of subwords or factors  in $V$ of length $n$. A word $B$ containing every finite binary word has  complexity $P_n(B) = 2^n$; we say such a word is a a \textbf{full complexity word}. We show below that a full complexity word does not have a  spacer rank construction, and  in Example~\ref{E:polyunranked} we construct a word that does not have a spacer rank construction but has polynomial  complexity.

\noindent The following will show that a full complexity  word does not have a spacer rank construction.

\begin{lem}\label{L:unranked}
If $K$ is a word  containing $1^k01^k$ for all $k \in \bn$, then $K$ does not have a  spacer rank construction.
\end{lem}

\begin{proof}
 Suppose that  $K$ has a  spacer rank construction, i.e.,
there is a doubly-indexed sequence of words $(v_{i,j})_{i\in\N, j\leq m_i}$ such that each $v_{i+1, j}$ is built from $$S_i=\{v_{i,j}\,:\, j\leq m_i\}.$$
From Definition~\ref{D:built} it follows that  for every $i>0$, all words in $S_i$ have length greater than 1, and  in fact, each of them must contain at least two $0$s. The spacer ranked construction also guarantees that for each $i\in\N$, $K$ is built from $S_i$.

Now fix any $i>0$; we derive a contradiction. Let $M$ be an upper bound for the lengths of words in $S_i$. Consider $$w=1^{M+1}01^{M+1}.$$
By assumption, $w$ occurs in $K$. Assume the $0$ in the middle of $w$ occurs at position $p$ in $K$. Then since $K$ can be written as
$$ K=w_11^{a_1}w_21^{a_2}\cdots, $$
where for all $n$, $w_n\in S_i$ and $a_n\in \mathbb{N}$, the position $p$ falls into some $w_n$ as presented above. 
Since $|w_n|<M$ and $w_n$ contains at least two $0$s, we get a contradiction.

\end{proof}

\begin{cor}
Full complexity words do not have a spacer rank construction. 
\end{cor}

We conclude this section by showing that there are words with a spacer rank construction that do not have a finite spacer rank construction.

\begin{prop}\label{infiniterank}
There exists a word of unbounded spacer rank.
\end{prop}

\begin{proof}
Let $F_i$, $i\geq 1$, enumerate all finite subsets of $\mathcal F$. Let $(a_i)_{i\geq 1}$ be a strictly increasing sequence such that
 \[a_i \geq 2 + \sum_{v\in F_i} |v|.\] Let $(n_i)_{i\geq 1}$ be inductively defined as
 $$n_1 = 3, \mbox{ and } n_{i+1} = n_i! \mbox{ for $i\geq 1$.} $$
 Consider the following ranked construction:
\[v_{0,1} = 0, \]
\[ v_{1,1} = 010,\
 v_{1,2} = 01^20,\
  v_{1,3} = 01^30.\]
For $i\geq 1$, suppose $v_{i,j}$, $1\leq j\leq n_i$, have been defined. We define $v_{i+1, j}$ for $1\leq j\leq n_{i+1}$. Note that $n_{i+1}=n_i!=|\mbox{Sym}(\{1, 2,\dots, n_i\})|$ (where $Sym$ stands for the symmetric group). We let $f_i$ be a bijection from $\{1, 2,\cdots , n_{i+1}\}$ to $\mbox{Sym}(\{1, 2,\dots, n_i\})$ such that $f_i(1)(j)=j$ for all $1\leq j\leq n_i$. Define, for $1\leq j\leq n_{i+1}$,
   \[v_{i+1,j} =
v_{i,f_i(j)(1)}1^{a_i}v_{i,f_i(j)(2)}1^{a_i}\cdots 1^{a_i}v_{i,f_i(j)(n_i)}.\]
Note that $v_{i,1}$ is an initial segment of $v_{i+1,1}$. We thus obtain a  spacer ranked construction for the
word $V = \lim_i v_{i,1}$.

By a standard induction, we have that 
\begin{enumerate}
\item[(*)] for any $i$ and $1\leq j\neq j'\leq n_i$, $v_{i,j}$ is not an initial segment of $v_{i,j'}$, and in particular $v_{i,j}\neq v_{i,j'}$;
\item[(**)] for any $i<i'$ and $1\leq j\leq n_i$ there is some $1\leq j'\leq n_{i'}$ such that $v_{i', j'}$ has $v_{i,j}$ as an initial segment.
\end{enumerate}

We claim that $V$ is not of finite spacer rank, thus it is a word of unbounded spacer rank. Assume toward a contradiction that $V$ has an at most spacer rank-$n$ construction, for some $n$. Let $(w_{r,s})_{r\geq 1, 1\leq s\leq k_r\leq n}$ be the levels of a spacer rank-$n$ construction of $V$.
Let $i_0$ be sufficiently large such that $n_{i_0}>n$. Let $r_0$ be sufficiently large such that for all $r\geq r_0$, for all $1\leq s\leq k_{r}\leq n$ and for all $1\leq j\leq n_{i_0}$,
 $$ |w_{r,s}|>|v_{i_0, j}|. $$
Let $i_1>i_0$ be such that $F_{i_1}=\{w_{r_1,s}\,:\, 1\leq s\leq k_{r_1}\}$ for some $r_1\geq r_0$.

Since $V$ has $v_{i_1+1, 1}$ as an initial segment, $V$ is built from $F_{i_1}$, and $a_{i_1}>|w_{r_1,s}|$ for all $1\leq s\leq k_{r_1}$, we conclude that for all $1\leq j\leq n_{i_1}$,
$$ v_{i_1, f_{i_1}(1)(j)} \mbox{ is built from $F_{i_1}$.} $$
In particular, for any $1\leq j\leq n_{i_1}$, $v_{i_1, j}$ is built from $F_{i_1}$.
By (**) we have that for any $1\leq j\leq n_{i_0}$ there is $1\leq j'\leq n_{i_1}$ such that $v_{i_0, j}$ is an initial segment of $v_{i_1, j'}$. Since $|w_{r_1, s}|>|v_{i_0, j}|$ for all $1\leq s\leq k_{r_1}$ and $1\leq j\leq n_{i_0}$, we conclude that for any $1\leq j\leq n_{i_0}$ there is some $1\leq s\leq k_{r_1}$ such that
$v_{i_0,j}$ is an initial segment of $w_{r_1, s}$. By (*) this implies that
$$ |\{v_{i_0,j}\,:\, 1\leq j\leq n_{i_0}\}|\leq |\{w_{r_1,s}\, :\, 1\leq s\leq k_{r_1}\}|. $$
 Hence $n\geq k_{r_1}\geq n_{i_0}$, contradicting our assumption that $n_{i_0}>n$.
\end{proof}

\section{Spacer Rank Constructions for Fixed Points of Substitutions and Generalized Morse Sequences}\label{S:sub}

In this section we explore sequences that are fixed points of substitutions and introduce a criterion for aperiodic substitutions having spacer rank greater than one; we refer to \cite{Fo02,Qu10} for background on substitutions. We first investigate the spacer rank of substitutions. 

Given a function $  \zeta: \{0, 1\} \to  \{0, 1\}^{<\N}$ and a sequence $w = w_0w_1w_2 \dots \in \{0, 1\}^\mathbb{N}$, we let  $\zeta(w)$ denote  $\zeta(w_0)\zeta(w_1)\zeta(w_2) \dots$.

\begin{defn}
A \textbf{substitution} is a function $\zeta: \{0, 1\} \to  \{0, 1\}^{<\N}$. We assume $\zeta(0)$  starts with 0 and has  length greater than 1 (so $\lim_{n\to\infty}|\zeta^n(0)|=\infty$), and we also assume that $\lim_{n\to\infty}|\zeta^n(1)|=\infty$.   It follows that there  is a sequence $u$ such that $\zeta(u) = u$, a \textbf{fixed point} of the substitution.  So there is a unique $u$ starting with $0$  and we can write
$u =\lim_{n\to\infty}\zeta^n(0)$ \cite{Fo02,Qu10}. All the sequences  we consider that are fixed points of substitutions  start with 0.
\end{defn}

\begin{example} \rm{
The Morse sequence is a fixed point of the substitution $0 \mapsto 01$ and $1 \mapsto 10$. 
}\end{example}

\begin{prop}
A sequence that is a fixed point of a substitution has an at most spacer rank two construction.
\end{prop}

\begin{proof}
Let $q$ denote a  sequence that is a fixed point of the substitution $\zeta$. For $k$ a positive integer, let $\zeta^{2^k}(0) = v_{k} 1^{x_{k}}$ where $v_k$ starts and ends with $0$ and $\zeta^{2^k}(1) = 1^{y_{k}} w_{k} 1^{z_k}$ where $x_k, y_k, z_k$ are nonnegative integers. Represent $\zeta$ as $0 \mapsto a_0a_1a_2 \dots a_n$ with $a_0 = 0$ and $1 \mapsto b_0b_1b_2\dots b_m$. Suppose $m > 0$ and $n > 0$ and also that there is some $j$ such that $b_j = 0$. We claim the following:

\begin{equation}\label{E:estimate}
|v_{r + 1}| > |v_r|\text{ and }|w_{r + 1}| > |w_r|.
\end{equation}

We first prove \eqref{E:estimate}. 
By replacing $\zeta^{2^r}$ with an arbitrary substitution $\tilde{\zeta}$ (or simply by an induction argument), we may assume that $r = 1$. Let $n'$ be the maximal integer such that $1 \le n' \le n$ and for $i > n'$, $a_i = 1$ and $a_{n' - 1} = 0$. Such an $n'$ must exist since the first digit $a_0 = 0$. Let $\zeta^2(0) = 0c_1 c_2 \dots c_k$ and $\zeta^2(1) = d_0d_1d_2 \dots d_p$. Choose $k'$ to be the maximal integer such that $1 \le k' \le k$ and for $i > k'$, $c_i = 1$ and $c_{k' - 1} = 0$. We show that $k' > n'$. First, observe that $k' \ge n'$ since the first digit of $\zeta(0)$ is $0$ so $0c_1 \dots c_n = 0a_1, \dots, a_n$. Then $k' > n'$ because there is some $b_j$ such that $b_j = 0$ so even if $a_i = 1$ for all $i \ge 0$, $c_\ell = 0$ for some $\ell > n$. Therefore, as $k' \ge \ell$, we must have $k' > n'$ and thus $|v_{2}| > |v_1|$. 

Suppose $b_0 = b_m = 1$. Let $m_1$ be the least integer greater than $0$ such that $b_{m_1} = 0$ and $m_2$ be the greatest integer less than $m$ such that $b_{m_2} = 0$. Let $p_1$ be the least integer greater than $0$ such that $d_{p_1} = 0$ and $p_2$ be the greatest integer less than $p$ such that $d_{p_2} = 0$. Observe that $p_2 - p_1 > m_2 - m_1$. Indeed, $p_2 - p_1 \ge m_2 - m_1$ since the first digit is a $1$ so $p_1 = m_1$ and $p_2 \ge m_2$. Because the first and last digits of $b$ is a $1$ and the middle digit is a $0$, we know that $m \ge 2$. Consequently, $p_1 = m_1$ and $p_2 > m_2$ so $p_2 - p_1 > m_2 - m_1$ so $|w_{2}| > |w_1|$. 

Next, suppose $b_0 = 0$. Let $m'$ be the greatest integer such that $b_{m'} = 0$ and $p'$ the greatest integer such that $d_{p'} = 0$. Note once again that $p' \ge m'$ since $\zeta(0)$ starts with a $0$ and $p' > m'$ since $m > 0$: the digit $b_1$ must be either a $0$ or a $1$ and if it were a $0$, then $d_0 = d_1 = 0$ and $d_{m + 1} = d_{m + 2} = 0$ so $p' > m \ge m'$ and $|w_2| > |w_1|$. 

If $b_m = 0$, let $m''$ be the least integer such that $b_{m''} = 0$ and $p''$ be the least integer such that $d_{p''} = 0$. A similar argument as above shows that $p'' > m''$ and thus $|w_2| > |w_1|$.

Notice that $v_{k + 1}$ and $w_{k + 1}$ can be built from $v_k$ and $w_k$. This is because $\zeta^{2^{k + 1}}(0)$ and $\zeta^{2^{k + 1}}(1)$ can be built from $\zeta^{2^{k}}(0)$ and $\zeta^{2^{k}}(1)$ and because $\zeta^{2^{k + 1}} = \zeta^{2^k} \circ \zeta^{2^k}$, it follows that $x_{k + 1}, y_{k + 1}, z_{k + 1} \in \{x_k, y_k, z_k, 0\}$. Hence, $v_{k + 1}$ and $w_{k + 1}$ can be built from $v_k$ and $w_k$. In addition, $q$ starts with $v_k$ since the first digit of $q$ is $0$. Hence $v_{k}$ and $w_k$ are a sequence of words of increasing length that build $q$, so $q$ is spacer rank two if there exists $j$ such that $b_j = 0$ and if $m, n > 0$. 

If $n = 0$, then the sequence is trivial and thus rank one. If $m = 0$, suppose $b_0 = 0$. Then the substitution $\zeta^2$ satisfies the condition that $|\zeta^2(0)| > 2$ and $|\zeta^2(1)| > 2$ and that there is some symbol in $\zeta^2(1)$ that is $0$, and we are in the case of $m, n > 0$ and there is some $j$ with $b_j = 0$. This leaves us with the last remaining case that $b_i = 1$. In this case, we show that $q$ is rank one. If $a_1 \dots a_N$ are all $1$'s, then $q$ is simply $011111\dots$. If there exists some $a_i = 0$ for $i \ge 1$, then we claim that $|v_2| > |v_1|$. Once again, $|v_2| \ge |v_1|$ since the first digit of $v_1$ is $0$ $|v_2| > |v_1|$ since the substitution of the second $0$ contains a $0$ and that $0$ is in a further place than $|v_1|$. Since $w_k$ are all empty, $v_{k + 1}$ are all built from $v_k$ and $q$ is rank one.
\end{proof}

\begin{example} \rm{
Let $\zeta$ denote the  \textbf{Fibonacci substitution} $0 \mapsto 01$ and $1 \mapsto 0$. Let $W =\lim_{n \to \infty} \zeta^n(0)$. As $W$ is a fixed point of  a substitution, it has an at most spacer rank two construction. We will show below that the dynamical system associated to the Fibonacci sequence is not rank one, thus showing it has a spacer rank two construction.
}\end{example}

\begin{example} \rm{
Let $\zeta$ denote the \textbf{Cantor substitution} $0 \mapsto 010$ and $1 \mapsto 111$ and the sequence $W = \lim_{n \to \infty} \zeta^n(0)$. As remarked in the proof of the above proposition, since $\zeta(1)$ has no zeroes in it, the Cantor sequence is rank one.
}\end{example}

\subsection{Periodic words that are fixed points of  substitutions}

We start with a few lemmas about when finite words must be periodic. 
The following lemma follows by induction and its proof is left to the reader.

\begin{lem}\label{lem1} Suppose $\alpha, \beta\in\{0,1\}^{<\N}$ satisfy $\alpha\beta=\beta\alpha$. Then there is $\gamma\in\{0,1\}^{<\N}$ and $m, n\in\mathbb{N}$ such that
$\alpha=\gamma^n$ and $\beta=\gamma^m$.
\end{lem}


\begin{cor}\label{lem2} Suppose $\alpha\in\{0,1\}^{<\N}$ is a subword of $\alpha\alpha$ whose occurrence does not coincide with either of the demonstrated copies of $\alpha$ in $\alpha\alpha$. Then there is $\beta$ and $n>1$ such that $\alpha=\beta^n$.
\end{cor}


\begin{lem}\label{lem3} Let $\alpha, \beta\in\{0,1\}^{<\N}$, $n\geq 2$ and $m\geq 1$. Suppose $|\alpha|>|\beta|$ but $\alpha^n$ is an initial segment (or an end segment) of $\beta^m$. Then there is $\gamma$ and $k, l\geq 1$ such that $\alpha=\gamma^k$ and $\beta=\gamma^l$.
\end{lem}

\begin{proof} First assume $\alpha^n$ is an initial segment of $\beta^m$. Let $\eta$ and $p\geq 1$ be such that $|\eta|<|\beta|$ and $\alpha=\beta^p\eta$. If $\eta$ is empty then there is nothing to prove. Otherwise, we have $\eta\beta=\beta\eta$. By Lemma~\ref{lem1}, there is $\gamma$ and $t, s$ such that $\eta=\gamma^t$ and $\beta=\gamma^s$. Then $\alpha=\beta^p\eta=\gamma^{ps+t}$. For the case when $\alpha^n$ is an end segment of $\beta^m$, reverse the order of the words and argue similarly.
\end{proof}

\begin{cor}\label{cor3.5} Let $\alpha, \beta\in\{0,1\}^{<\N}$, $n, m\geq 1$. Suppose $\alpha^n=\beta^m$. Then there is $\gamma$ and $k, l\geq 1$ such that $\alpha=\gamma^k$ and $\beta=\gamma^l$.
\end{cor}


\begin{defn}
Consider a periodic infinite word $V$. We say that a finite word $v$ is a \textbf{periodic building block} of $V$ if  $V=vvv\cdots$.
$v$ is called a \textbf {principal periodic building block} if $v$ is a periodic building block and for every periodic building block $u$ of $V$, there is some $k\geq 1$ with $u=v^k$. 
\end{defn}

The following lemma follows directly from Lemma~\ref{lem3}. 

\begin{lem}\label{lem4} If $\alpha$ and $\beta$ are two periodic building blocks of $V$, then there is a periodic building block $\gamma$ and $k, l\geq 1$ such that $\alpha=\gamma^k$ and $\beta=\gamma^l$.
\end{lem}


\begin{prop} Every periodic infinite word has a unique principal periodic building block.
\end{prop}

\begin{proof} Let $v$ be the shortest periodic building block of $V$. If $u$ is another periodic building block, then by Lemma~\ref{lem4} there is a periodic building block $\gamma$ and $k, l\geq 1$ such that $v=\gamma^k$ and $u=\gamma^l$. By the minimality of the length of $v$, we have $\gamma=v$ and $k=1$. Thus $u=v^l$, and $v$ is principal.
\end{proof}

\begin{defn}
We say $v$ is a \textbf{periodic building block} of a finite word $u$ if $u=v^k$ for some $k\geq 1$. We say a substitution $\zeta$ is \textbf{nontrivial} if in addition to our assumptions $\zeta(0)$ contains a $1$. 
\end{defn}


\begin{prop}\label{mainprop} Let $V$ be a periodic infinite word and $v$ be its principal periodic building block. Let $\zeta$ be a nontrivial substitution with $|\zeta(0)|\geq |v|$. Suppose $V=\lim_n \zeta^n(0)$. Then one of the following holds:
\begin{enumerate}
\item[\rm (i)] $v$ is a periodic building block of both $\zeta(0)$ and $\zeta(1)$.
\item[\rm (ii)] $v=01^t$ for some $t\geq 2$, $V=(01^t)^\infty$, $\zeta(0)=01^t0$ and $\zeta(1)=1$.
\item[\rm (iii)] $v=01$, $V=(01)^\infty$, $\zeta(0)=(01)^a0$ for some $a\geq 1$ and $\zeta(1)=1(01)^b$ for some $b\geq 0$.
\end{enumerate}
\end{prop}

\begin{proof}
Write
$$ V=a_1a_2\cdots $$
with $a_i\in\{0,1\}$ for all $i\geq 1$. In fact, $a_1=0$. Note that $\zeta(V)=V$, i.e., we can also write
$$ V=\zeta(a_1)\zeta(a_2)\cdots. $$

First suppose that $v$ is a periodic building block of $\zeta(0)$ but not of $\zeta(1)$. Then there exist $k\geq 0$ and $\alpha\in\{0,1\}^{<\N}$ with $0<|\alpha|<|v|$ and $\zeta(1)=v^k\alpha$.

By Lemma~\ref{lem2}, the starting position of any $\zeta(a_i)$ when $a_i=0$ must be one plus a multiple of $|v|$. This is because, otherwise the first copy of $v$ in $\zeta(a_i)$ would be a subword of $vv$ whose occurrence in $vv$ does not coincide with either of the demonstrated copies of $v$ in $vv$, and Lemma~\ref{lem2} gives a shorter building block of $V$ than $v$, contradicting the assumption that $v$ is principal. 

Since $V$ is periodic and starts with $0$, $V$ contains infinitely many $0$s. Thus the maximal blocks of $1$s in $V$ are finite. Consider a finite maximal block of $1$s, say it is an occurrence of $1^t$ for some $t\geq 1$. By comparison we get that $\zeta(1)^t=v^s$ for some $s\geq 1$. Thus Corollary~\ref{cor3.5} gives a periodic building block $\gamma$ of both $\zeta(1)$ and $v$. We must have $|\gamma|<|v|$, which contradicts the assumption that $v$ is principal. This completes the proof in the first case.

Next suppose that $v$ is a periodic building block of $\zeta(1)$ but not of $\zeta(0)$. Let $j$ be the least such that $a_j=1$. If the starting position of $\zeta(a_j)$ is not one plus a multiple of $|v|$, then we apply Lemma~\ref{lem2} to get a shorter periodic building block than $v$, contradicting the principality of $v$. Thus the starting position of $\zeta(a_j)$ is one plus a multiple of $|v|$, which implies that $\zeta(0)^{j-1}=v^m$ for some $m\geq 1$. Thus by Corollary~\ref{cor3.5} we get a shorter periodic block than $v$, again contradicting the principality of $v$. This completes the proof in the second case.

Finally suppose $v$ is a periodic building block of neither $\zeta(0)$ nor $\zeta(1)$. Since $V$ is periodic, $V$ contains infinitely many $0$s as well as infinitely many $1$s. Since $|\zeta(0)|>|v|$, we get that for any $i\geq 1$, if $a_i=0$, then the starting position of $\zeta(a_i)$ in $V$ is one plus a multiple of $|v|$. This is because, otherwise we have that the first copy of $v$ in $\zeta(a_i)$ is a subword of $vv$ whose occurrence does not coincide with either of the demonstrated copies of $v$ in $vv$, and by Lemma~\ref{lem2} we get a shorter periodic building block of $V$, contradicting the principality of $v$. In particular, we conclude that $V$ does not contain $00$. 

Suppose $V$ contains $11$. Consider an arbitrary maximal block of $1$s in $V$, say it is of the form $1^t$ with $t\geq 2$. By comparison we get that there are $k, l$ such that $v^k=\zeta(0)\zeta(1)^t$. Now if there are $t<t'$ such that $1^t$ and $1^{t'}$ are both maximal blocks of $1$s, then by comparing $v^k=\zeta(0)\zeta(1)^t$ with $v^{k'}=\zeta(0)\zeta(1)^{t'}$, we get that $v^{k'-k}=\zeta(1)^{t'-t}$, and Corollary~\ref{cor3.5} gives a shorter periodic building block than $v$, contradicting the principality of $v$. Thus we conclude that there is a unique $t\geq 2$ as the length of all maximal blocks of $1$s in $V$, and that $V=(01^t)^\infty$. As $v$ is the principal periodic building block of $V$, we must have $v=01^t$.

Now if $|\zeta(1)^t|=t|\zeta(1)|\geq 2|v|=|v^2|$, then $v^2$ is an end segment of $\zeta(1)^t$, and by Lemma~\ref{lem3}, either $v$ is a periodic building block of $\zeta(1)$, contradicting our case assumption, or we obtain a shorter periodic building block than $v$, contradicting the principality of $v$. Thus we have $|\zeta(1)^t|<|v^2|$ and $|\zeta(0)|>|v|$. Since $\zeta(1)^t$ is an end segment of $v^2$, and noting that $v^2=01^t01^t$, we conclude that $\zeta(1)^t$ either contains exactly one $0$, which is absurd since $t\geq 2$, or $\zeta(1)^t$ contains no $0$. In this last situtation, we have that $\zeta(0)$ contains at least two $0$s and $\zeta(1)=1^b$ for some $b\geq 1$. It is easy to see that we must have $\zeta(0)=01^t0$ and $\zeta(1)=1$.

We are left with the case that $V$ does not contain $11$. In this case we obviously have $V=(01)^\infty$, $v=01$, $\zeta(0)=(01)^a0$ for some $a\geq 1$ and $\zeta(1)=1(01)^b$ for some $b\geq 0$.
\end{proof}

We next give a lemma that begins connecting our previous lemma with the Euclidean pairs of words that we are about to define.

\begin{lem}\label{lem6} Let $\zeta$ be a nontrivial substitution and $a\in \{0,1\}$. Then $\zeta(a)$ is an initial  (respectively, end) segment of $\zeta(a^c)$ if and only if  $\zeta^2(a)$ is an initial (respectively, end) segment of $\zeta^2(a^c)$.
\end{lem}

\begin{proof} We prove the case where $a=0$ and $\zeta(0)$ is an initial segment of $\zeta(1)$. The other cases are similar. Suppose $\zeta(0)$ is an initial segment of $\zeta(1)$. Since $\zeta^2(0)=\zeta(\zeta(0))$ and $\zeta^2(1)=\zeta(\zeta(1))$, we have that $\zeta^2(0)$ is an initial segment of $\zeta^2(1)$. Conversely, suppose $\zeta(0)$ is not an initial segment of $\zeta(1)$ but $|\zeta(0)|\leq |\zeta(1)|$. Then there are $\alpha\in\{0,1\}^{<\N}$ and $b\neq c\in\{0,1\}$ such that $\alpha b$ is an intial segment of $\zeta(0)$ and $\alpha c$ is an initial segment of $\zeta(1)$. Then $\zeta(\alpha)\zeta(b)$ is an initial segment of $\zeta^2(0)$ and $\zeta(\alpha)\zeta(c)$ is an intial segment of $\zeta^2(1)$. Since $\zeta(\alpha)\zeta(b)$ and $\zeta(\alpha)\zeta(c)$ are not initial segments of one another, we have that $\zeta^2(0)$ is not an intial segment of $\zeta^2(1)$.
\end{proof}

\begin{defn}
Let $\alpha, \beta\in\{0,1\}^{<\N}$. We call $(\alpha, \beta)$ a \textbf{Euclidean pair} if there is $\gamma\in\{0,1\}^{<\N}$ and $k, l\geq 1$ such that
$\alpha=\gamma^k$ and $\beta=\gamma^l$. A nontrivial substitution $\zeta$ is called \text{Euclidean} if $(\zeta(0), \zeta(1))$ is a Euclidean pair. 
\end{defn}

The following lemma follows from the definitions.

\begin{lem}\label{lem7} Let $\alpha, \beta\in\{0,1\}^{<\N}$. Suppose $|\alpha|>|\beta|$. Then $(\alpha, \beta)$ is a Euclidean pair if and only if  there is $k\geq 1$ and $\gamma\in\{0,1\}^{<\N}$ such that $\alpha=\beta^k\gamma$ and $(\beta,\gamma)$ is a Euclidean pair.
\end{lem}

The above lemma justifies the terminology. If $(\alpha, \beta)$ is a Euclidean pair, then we can perform the ``Euclidean algorithm" suggested by the lemma to arrive at a $\gamma$ that is a common ``factor" of $\alpha$ and $\beta$. Conversely, if the Euclidean algorithm is successfully performed, then $(\alpha, \beta)$ is a Euclidean pair. 

\begin{lem}\label{lem8} Let $\zeta$ be a nontrivial substitution. Then $\zeta$ is Euclidean if and only if  $\zeta^2$ is Euclidean.
\end{lem}
\begin{proof} The forward direction follows directly from the definition. For the converse, suppose $\zeta^2$ is Euclidean. When  $\zeta^2(0)$ is an initial and an end segment of $\zeta^2(1)$ . 
 (the proof is
similar when $\zeta^2(1)$ is an initial and an end segment of $\zeta^2(0)$), $\zeta(0)$ is an initial and
an end segment of $\zeta(1)$), and there exist $m, n\in\N^+$, such that $\zeta((\zeta(0))^m) = \zeta((\zeta(1))^n)$. If
$\zeta(1)$ is not an initial segment of $(\zeta(0))^m$, then consider the first position $i$ such
that $\zeta(1)(i)\neq (\zeta(0))^m(i)$ (there exists such $i$ because $(\zeta(0))^m$ is not an initial
segment of $\zeta(1))$, $\zeta((\zeta(0))^m) = \zeta((\zeta(1))^n)$, then there must exist $m_1\in\N^+$ such
that $\zeta(1)$ is an initial segment of $\zeta(0^{m_1} 1)$, contradicting to our assumption. So
$\zeta(1)$ is an initial segment of $(\zeta(0))^m, \zeta(1) = (\zeta(0))^{m^\prime}
v$, $|v| < |\zeta(0)|$,$m^\prime\in\N^+$,
if $|v| = 0$ then $\zeta$ is Euclidean. If $|v|\neq $0, then $\zeta(0) = vv_1$ for some $v_1\in\{0,1\}^{<\N}$.
$\zeta(0)$ is an initial and end segment of $\zeta(1)$, so $v$ is an end segment of $\zeta(0)$. If
$v_1$ is not an initial segment of $\zeta(0)$, then $(\zeta(0))^{m^\prime+1 }$ is not an initial segment
of $(\zeta(1))^2$, consider the first position $j$ such that $(\zeta(0))^{m\prime+1}(j)\neq (\zeta(1))^2(j)$,
$\zeta((\zeta(0))^m) = \zeta((\zeta(1))^n)$, then there must exist $m_2\in\N^+$ such that $\zeta(1)\zeta(0)$ is
an initial segment of $\zeta(0^{m_2} 1)$, contradicting to our assumption. So $v_1$ is an initial
segment of $\zeta(0)$, then there exist $\gamma\in \{0,1\}^{<\N}, m_3,m_4\in\N, v =\gamma^{m_3} , \zeta(0) =\gamma^{m_4} $. It follows that $\zeta$ is Euclidean.

\end{proof}

It follows from this lemma that if $\zeta$ is Euclidean, then $\zeta^{2^k}$ is Euclidean for any $k$ which we can use to prove the next theorem which provides further classification of periodic substitutions.

\begin{thm}\label{periodic} Let $V\neq (01^t)^\infty$, for any $t\geq 1$, be a periodic word and $v$ be its principal periodic building block. Let $\zeta$ be a nontrivial substitution with $V=\lim_n\zeta^n(0)$. Then $v$ is a periodic building block of both $\zeta(0)$ and $\zeta(1)$. In particular, one of $\zeta(0)$ and $\zeta(1)$ is an initial (and end) segment of the other.
\end{thm}

\begin{proof} There is $k\geq 0$ such that $|\zeta^{2^k}(0)|\geq |v|$. Since $\zeta^{2^k}$ also generates $V$, i.e. $V=\lim_n \zeta^{n2^k}(0)$, Proposition~\ref{mainprop} gives that $v$ is a periodic building block of both $\zeta^{2^k}(0)$ and $\zeta^{2^k}(1)$. Thus $\zeta^{2^k}$ is Euclidean. By Lemma~\ref{lem8}, $\zeta$ is Euclidean. Thus there is $\alpha\in \{0,1\}^{<\N}$ that is a periodic building block of both $\zeta(0)$ and $\zeta(1)$. It follows that $\alpha$ is a periodic building block of $V$. Since $v$ is principal, $v$ is a periodic building block of $\alpha$. Hence $v$ is a periodic building block of both $\zeta(0)$ and $\zeta(1)$.
\end{proof}

\subsection{Coupled words and coupled substitutions}

We move now introduce a new possible criterion for substitution sequences which allows us to identify a large class of substitutions as having spacer rank greater than one.


\begin{defn}
For $a\in \{0,1\}$, denote $1-a$ by $a^c$. We call a word $u$ \textbf{coupled} if it is of the form
$$ u=a_1a_1^c\cdots a_ka_k^c $$
for $k\geq 1$ and $a_1,\dots, a_k\in\{0,1\}$. Likewise, an infinite word $V$ is \textbf{coupled} if all of its initial segments of even lengths are coupled. 
We call a substitution $\zeta:\{0,1\}\to \{0,1\}^{<\N}$ \textbf{coupled} if both $\zeta(0)$ and $\zeta(1)$ are coupled. 
An example of a coupled substitution is the \textbf{Morse substitution} $0\mapsto 01, 1\mapsto 10$, which generates the Morse sequence $V=\lim_n\zeta^n(0)$.
\end{defn}

The following lemma is easy to see.

\begin{lem} If a substitition $\zeta$ is coupled, then its limit $V=\lim_n \zeta^n(0)$ is coupled.
\end{lem}

\begin{thm} An aperiodic, coupled, infinite word $V$ does not have a rank one construction.
\end{thm}

\begin{proof} It is easy to see that $V$ does not contain $000$ or $111$. We assume that $V$ starts with $0$. By the coupledness, $V$ in fact starts with $01$.
Note that if $V$ does not contain $11$, then it is periodic with $01$ repeated indefinitely, contradicting our assumption that $V$ is aperiodic. Thus we must have that $V$ contains an occurrence of $11$. 
Now assume $V$ is built from a word $v$ of length $>1$ that starts and ends with $0$ and contains an occurrence of $11$. Let $h$ be the length of $v$. We write $v=a_1\cdots a_h$. We consider two cases.

Case 1. $h$ is even. Let $a_ia_{i+1}$ be the leftmost occurrence of $11$ in $v$. Note that $i$ must be even. Let $m$ be the largest positive integer such that we can write $V$ as
$$ V=v^m10\cdots. $$
Such $m$ exists since $V$ is aperiodic. Since $V$ is built from $v$, we can write $V=v^m1v\cdots$. In other words, the demonstrated $0$ in the above expression is the beginning of another copy of $v$. Now consider $a_ia_{i+1}$ in this copy of $v$, which is 11. Since $i$ is even, this $a_i=1$ occurs in $V$ at an odd position, contradicting the coupledness of $V$.

Case 2. $h$ is odd. Since $V$ is built from $v$, we can write
$$ V=v1^{t_1}v1^{t_2}\cdots.$$
Since $V$ does not contain $111$, each $t_i\in\{0,1,2\}$. Since $h$ is odd, $t_1\geq 1$. By the coupledness of $V$, and by an easy induction, we can see that if $t_1=\cdots=t_k=1$, then $t_{k+1}\geq 1$. If $t_i=1$ for all $i\geq 1$, then $V$ is periodic with the initial segment $v1$ repeated indefinitely, contradicting the assumption that $V$ is aperiodic. Thus for some $i\geq 1$, $t_i=2$. Now an argument similar to Case 1 gives a contradiction. In fact, let $i$ be the smallest with $t_{i}=2$ and consider the $(i+1)$-th copy of $v$ in $V$. Its leftmost occurrence of $11$ takes place at an odd position in $V$, contradicting the coupledness of $V$.
\end{proof}

\begin{cor}\label{C:notrankone} If $\zeta$ is a coupled substitution and $V=\lim_n \zeta^n(0)$ is aperiodic, then $V$ does not have a rank one construction.
\end{cor}

In particular, we achieve that the Morse sequence --  a fixed point of the substitution $0 \mapsto 01$ and $1 \mapsto 10$ -- is not a rank one word. Moreover, by Theorem \ref{periodic}, for a word $V$ that is a fixed point of a nontrivial substitution, the only possible periodic and coupled $V$ is $V = (01)^\infty$.


\subsection{Rank one words that are  fixed points of substitutions}
There are some special classes of substitutions that we can prove are rank one, as well as a few that we can prove are not rank one. A few useful definitions for these cases are the following.

\begin{defn}
A substitution $\zeta: \{0,1\} \to \{0,1\}$ is called a \textbf{proper} substitution if for all $a \in \{0,1\}$, $\zeta(a)$ begins with the same letter and ends with the same (potentially different from the first) letter.
\end{defn}

\begin{defn}
A substitution $\zeta: \{0,1\} \to \{0,1\}$ is called a \textbf{primitive} substitution if for all $a \in \{0,1\}$, every element of $\{0,1\}$ is in $\zeta (a)$. More generally, a substitution is called \textbf{eventually primitive} (\textbf{proper}) if there exists $ n \in \N$ such that $\zeta^n$ is primitive (proper).
\end{defn}

\begin{prop}
Suppose $\zeta$ is a not-eventually primitive substitution with $V = \lim_{n\to \infty} \zeta^n(0)$ as a fixed point. Then $V$ has a rank one construction.
\end{prop}

\begin{proof}
First, if $\zeta$ is a not eventually primitive substitution, then we can we can split into $4$ cases that can each be dealt with simply.
\begin{enumerate}
\item Suppose $\zeta(0) = 0^{n_1}$ and $\zeta(1) = v$. Then $V = 000 \ldots$ is the fixed point, which is periodic and therefore rank one.
\item Suppose $\zeta^k(0) = 1^{n_k}$ and $\zeta(1) = v$ for all $k$. If $v$ begins with $0$, the limit above does not converge. Now, if $v$ begins with $1$, then we have $\zeta^2(0) = 1^{n_2} = \zeta(1^{n_1}) = v^{n_1}$ implies that $v = 1^{n_2/n_1}$ so we have $V = 111 \ldots$ is our fixed point, which is although not technically a rank one word, it is periodic and defines a one-point dynamical system.

\item Suppose $\zeta(0) = v$ and $\zeta(1) = 1^{m_1}$. Then if $v$ starts with $1$, then $V = 111 \ldots$ is the fixed point which we discussed above. If $v$ starts with $0$, then we have that $v$ maps to something built by $v$, because every $0$ in $v$ gets replaced with $v$ and every $1$ gets replaced with $m_1$ spacers. So we have that $\zeta^n(0)$ is built by $\zeta^{n-1}(0)$, so we have an infinite number of words building $V$ so $V$ is symbolic rank one.

\item Suppose $\zeta(0) = v$ and $\zeta^k(1) = 0^{m_k}$ for all $k$. Then if $v$ begins with a $1$, then $V = 000 \ldots $ is our fixed point which like above is periodic and therefore rank one. Now, note that $\zeta(1) = 0^{m_1} \implies \zeta(\zeta(1)) = \zeta(0)^{m_1} = v^{m_1} = 0^{m_2}$, so we must have that $v = 0^{m_2/m_1}$ and so we still have $V = 000 \ldots$ as our fixed point which is rank one.
\end{enumerate}
In all cases, we have that $V$ is either periodic or has an infinite number of finite words building it.
\end{proof}

Note that in this proof, the first and third cases  used the weaker condition of not-primitive which does not hold for the the second and fourth case as can be seen by the Fibonacci substitution, which is primitive but not eventually primitive,  as the second power of the substitution is not primitive. However, even in the second and fourth case, we only needed that both $\zeta$ and $\zeta^2$ are not primitive. 

Theorem~\ref{T:rank2sub}  shows that certain sequences that are fiexed-points of substitutions cannot be rank one, and so therefore must be of spacer rank two. We only consider a special case of constant length proper substitutions with a certain beginning and end. For another possible approach to show that some substitutions are of spacer rank two we note that Gao and Ziegler in \cite{GaZi18} have shown that an infinite odometer cannot not be a factor of a rank one shift. 
Thus one way to show that a subshift is not rank one is to show that it has an infinite odometer factor.

\begin{thm}\label{T:rank2sub}
Suppose that  $\zeta$ is a proper, constant-length, substitution such that the word \\ $V = \lim_{n \to \infty} \zeta^n(0)$ is aperiodic and the first and last letters of $\zeta(a)$ are different. Then $V$ has a spacer rank two construction. 
\end{thm}

\begin{proof}
We will only show one case the case for proper substitutions of the form $\zeta(a) = 0\ldots 1$, but the case follows similarly for $\zeta(a) = 1 \ldots 0$. 
Now suppose $\zeta : \{0,1\} \to \{0,1\}^{<\N}$ is an aperiodic substitution such that $\zeta(0) = 0 \ldots 1$ and $\zeta(1) = 0 \ldots 1$ with $|\zeta(0)| = k = |\zeta(1) |$ and $n$ being the largest $n < k$ such that $1^n$ is a subword of $\zeta(0)$ or $\zeta(1)$. 
Note that $1^{n+1}$ cannot appear as a subword of $V = \lim_{n \to \infty} \zeta^n(0)$ and suppose for contradiction that $V$ is a rank-one word. Then there exists $ v$ beginning and ending with $0$ such that $v$ builds $V$ and $|v| > (k^2 - 1)n$, because there must be infinitely many words that build a rank-one word. 
Since $v$ builds $V$, then we will have that $\zeta(v)$ agrees with $V$ for the first $k|v|$ terms. 
That means that we can write \[\zeta(v) = v1^{a_1}v \ldots v 1^{a_{k-1}} w_S\] where $0 \leq a_i \leq n$ and $S = \sum_{i=1}^{k-1} a_i$ and $w_S = v \upharpoonright |v| - S$. 
Note that $S \leq (k-1)n$ so $w_S$ is a nonempty word. 
Then, note that we similarly have \[\zeta(w_S) = v1^{a_1}v \ldots v 1^{a_{k-1}} w_{T}\] where the $a_i$ are the same as in $\zeta(v)$ and $T = (k+1)S \leq (k^2 - 1)n$ so $w_T$ is nonempty as well. 
Now, we must have that $V \upharpoonright k^2|v| = \zeta^2(v)$ since $v$ builds $V$, so using the above expressions, we find that \[V = v1^{a_1}v \ldots v 1^{a_{k-1}} w_S \zeta(1)^{a_1} v1^{a_1}v \ldots v 1^{a_k} w_S \ldots v1^{a_1}v \ldots v 1^{a_{k-1}} w_S \zeta(1)^{a_{k-1}} v1^{a_1}v \ldots v 1^{a_{k-1}} w_T \ldots\]

Now, we can split into a few different cases and complete the proof by using the fact that $V$ as written above, must still be built by $v$. First, note that if $S = 0$, then we just have $V = vvv \ldots$ which is a contradiction of aperiodicity. 
Next, note that if the $S^\text{th}$ letter after the first copy of $w_S$ is a $1$, then that is a contradiction of the fact that $v$ ends with $0$. 
Now, note that if the $S^\text{th}$ letter after the first copy of $w_S$ is a $0$, then it must be followed by a $1$ or another copy of $v$. We will then break up this case into a few more cases to get all the contradictions we need.

If $S = ka_1$, then there is a contradiction because the $ka_1^\text{th}$ letter will be the last letter of $\zeta(1)^{a_1}$ which is set as $1$. 
If $S = ka_1 - 1$, and all the $a_i$ are equal, then $V$ is periodic or the $v$ does not match up with the letters in $\zeta(1)$, either of which is a contradiction. 
If there exists an $a_i > a_1$, then there will be extra $0$'s that cannot be accounted for in copies of $v$, so $v$ does not build $V$, a contradiction. 
If there exists $s a_i < a_1$, this will be functionally identical to the case of $S > ka_1$ which we will deal with last. Next we have if $S < ka_1 - 1$, then there are extra $0$'s that cannot be accounted for in copies of $v$ due to the size of $v$, so $v$ does not build $V$, a contradiction.

Finally, suppose $S > ka_1$. 
Note that we still have $S = \sum_{i=1}^{k-1} a_i \leq (k-1)n$ and $|v| > (k^2 - 1)n$. 
By comparison, we have that $w_S \zeta(1)^{a_1} \zeta(v) = v1^{b_1} v \cdots$ with the same restrictions of $b_i$ as are on the $a_i$ (i.e. they are positive integers $\leq n$). 
Now, note that $|w_S \zeta(1)^{a_1} \zeta(v)| = (k+1)|v|+ka_1 - S$. 
So we have that \[k|v|<|w_S \zeta(1)^{a_1} \zeta(v)| < (k+1)|v|\]
So we must have that there are at least $B = |v|+ka_1 - S$ ones due to spacers. However, $B > (k^2 - 1)n + k(n-1) - (k-1)n = k^2 - k$. Since the spacers are split into $k-1$ groups, we must have that some spacers come in a substring of length $\geq k$. However, $n < k$, so this is a contradiction because we cannot have more than $n$ $1$'s in a row.
\end{proof}  

Now we study condition for when a substitution determines a rank one word. We note that if $\zeta(0)$ contains only one $0$ and $\zeta(1)$ does not contain $0$, then $V=01^\infty$ and hence it is not a rank one word. We introduce the following definition.

\begin{defn}
We call a substitution $\zeta$ \textbf{adequate} if $\zeta(0)$ contains two $0$s and $\zeta(1)$ contains $0$.  
\end{defn}

If $\zeta$ is adequate, then $\zeta^n$ is adequate for all $n\geq 1$, and  $(|\zeta^n(0)|)_{n\geq 1}$, and $(|\zeta^n(1)|)_{n\geq 1}$ are both strictly increasing.

\begin{lem}\label{lemr1} Let $V$ be an infinite word built  by a nontrivial substitution $\zeta$. If $\zeta(0)$ contains two $0$s and $\zeta(1)$ does not contain $0$, then $V$ is a rank one word.
\end{lem}

\begin{proof} Assume $\zeta(0)=\alpha1^a$, where $\alpha$ starts and ends with $0$, and $a\geq 0$, and $\zeta(1)=1^b$, where $b\geq 1$. Then we claim that for all $k\geq 1$, $\zeta^k(0)$ is built from $\alpha$. We prove this by induction. For $k=1$ this is obvious. Next, let $\alpha=0c_1c_2\dots c_n$, where $c_1, \dots, c_n\in\{0,1\}$, then
$$ \zeta^{k+1}(0)=\zeta^k(\alpha1^a)=\zeta^k(0)\zeta^k(c_1)\zeta^k(c_2)\dots\zeta^k(c_n)\zeta^k(1)^a.$$
Since $\zeta^k(1)$ does not contain $0$ for any $k\geq 1$, from the inductive hypothesis that $\zeta^k(0)$ is built from $\alpha$, we get that $\zeta^{k+1}(0)$ is also built from $\alpha$. 

Now it follows from the claim that $V=\lim_{k\to\infty}\zeta^k(0)$ is also built from $\alpha$. Since $V$ can also be obtained from substitution $\zeta^2$, a similar argument gives that $V$ is built from a word that is longer than $\alpha$. Repeating this, we conclude that there are infinitely many finite words $\beta$ such that $V$ is built from $\beta$. This implies that $V$ is rank one.
\end{proof}

For the rest of this subsection, consider an adequate substitution $\zeta$ and $V=\lim_n\zeta^n(0)$.

\begin{lem}\label{lemr2} Let $\zeta$ be an adequate substitution, and suppose
$$\begin{array}{l} \zeta(0)=01^{s_1}\cdots 01^{s_k} \\ \zeta(1)=1^{t_0}01^{t_1}\cdots 01^{t_l}\end{array} $$
where $k\geq 2$, $s_i\geq 0$ for $i=1,\dots, k$, $l\geq 1$, $t_j\geq 0$ for $j=0,\dots, l$. Let $V=\lim_n \zeta^n(0)$. 
Then the length of a maximal block of $1$s in $V$ is one of the following numbers:
$$ s_1, \dots, s_k, t_1,\dots, t_l, s_k+t_0, t_l+t_0. $$
\end{lem}


\begin{proof}
Write $V=a_1a_2\cdots$ where $a_m\in\{0,1\}$ for $m\geq 1$. Then $V=\zeta(a_1)\zeta(a_2)\cdots$. A maximal block of $1$s in $V$ must occur between two $0$s in an occurrence of $\zeta(0)\zeta(0)$, $\zeta(0)\zeta(1)$, $\zeta(1)\zeta(0)$, or $\zeta(1)\zeta(1)$. By observation, the length of a maximal block of $1$s in $V$ must be one of the numbers listed.
\end{proof}

This means that if $V$ is an unbounded rank-one word, then it has bounded spacer parameter.

Recall from \cite[\S2.4]{GaHi16AT} that if $W$ is an unbounded rank one word, we can define $L_W(i)$ to be the length of the $i$-th maximal block of $1$s in $W$. If $W$ is aperiodic and is built from a finite word $w$ which starts and ends with $0$ and there are $r$ many $0$s in $w$, then $L_W$ is periodic on the congruence classes $i\not\equiv 0$ mod $r$ and aperiodic on the congruence class $i\equiv 0$ mod $r$. By Corollary 2.4 (b) of \cite{GaHi16AT}, if the spacer parameter of $W$ is bounded by $B$ and $B<|w|$, then there is $T_r$ such that, if an occurrence of $w$ in $W$ is preceded by $p$ many maximal blocks of $1$s in $W$, then this occurrence of $w$ is expected if and only if  $L_W(p+r), L_W(p+2r),\dots, L_W(p+T_rr)$ are not all equal.

For the above fixed adequate substitution $\zeta$ and the infinite word $V$ that is a fixed point of $\zeta$, we let $L$ denote the function $L_V$. 

\begin{prop}\label{propr} Let $\zeta$ be an adequate substitution and $V=\lim_n\zeta^n(0)$. Assume $V$ is an aperiodic rank one word and $V$ is built from a finite word $v$. Then for sufficiently large $n$, both $\zeta^n(0)$ and $\zeta^n(1)$ are built from $v$.
\end{prop}

\begin{proof} By Lemma~\ref{lemr2} there is an upper bound $B$ for the spacer parameter of $V$. Without loss of generality assume $|v|>B$. If this does not hold, we can consider a finite word $v'$ such that $|v'|>B$, $v'$ is built from $v$, and $V$ is built from $v'$. If the conclusion of the proposition holds for $v'$, then it holds for $v$.

Let $r$ be the number of $0$s in $v$. Let $T_r$ be the number given by Proposition~2.4(b) of \cite{GaHi16AT} mentioned above. That is, if an occurrence of $v$ in $V$ is preceded by $p$ many maximal blocks of $1$s, then this occurrence of $v$ is expected iff $L(p+r), L(p+2r), \dots, L(p+T_rr)$ are not all equal. 

Let $n$ be sufficiently large such that, if we denote $\zeta^n$ by $\tau$, and write
$$\begin{array}{l} \tau(0)=\zeta^n(0)=01^{s_1}\cdots 01^{s_k} \\ \tau(1)=\zeta^n(1)=1^{t_0}01^{t_1}\cdots 01^{t_l}\end{array} $$
where $k\geq 2$, $s_i\geq 0$ for $i=1,\dots, k$, $l\geq 1$, $t_j\geq 0$ for $j=0,\dots, l$, then
\begin{enumerate}
\item[\rm (a)] $k, l\geq T_rr+1$;
\item[\rm (b)] there are $i_0<i_1<k$, $0<j_0<j_1<l$ with $r\mid (i_1-i_0), (j_1-j_0)$ and $s_{i_0}\neq s_{i_1}, t_{j_0}\neq t_{j_1}$. 
\end{enumerate}
Note that $\tau$ is an adequate substitution generating $V$. Since $\tau(0)$ is an initial segment of $V$, we must have $i_0\equiv i_1\equiv 0$ mod $r$. Call the $i_0$-th and $i_1$-th maximal blocks of $1$s in $\zeta(0)$ and the $j_0$-th and $j_1$-th maximal blocks of $1$s in $\zeta(1)$ {\em special blocks}.

Assume first that $V$ contains $00$. Then $V$ also contains $\tau(0)\tau(0)$. Consider an arbitrary occurrence of $\tau(0)\tau(0)$ in $V$. Suppose there are $p$ many maximal blocks of $1$s in $V$ before this occurrence of $\tau(0)\tau(0)$. Then the special blocks in the occurrence of $\tau(0)\tau(0)$ have the following indices in $L$:
$$ p+i_0, p+i_1, p+k+i_0, p+k+i_1. $$
Since $L(p+i_0)=s_{i_0}\neq s_{i_1}=L(p+i_1)$ and $L(p+k+i_0)=s_{i_0}\neq s_{i_1}= L(p+k+i_1)$, these indices must be in the same congruence class mod $r$. In particular, $k=(p+k+i_0)-(p+i_0)\equiv 0$ mod $r$. Thus $k$ is a multiple of $r$, and $\tau(0)$ is built from $v$. Write
$$ \tau(0)=v1^{u_1}\cdots v1^{u_{k/r}}. $$
The demonstrated occurrences of $v$ in this expression of $\tau(0)$ are called {\em $\tau(0)$-expected occurrences}.

Since $\tau(0)$ is an intial segment of $V$, its first occurrence of $v$ in the first occurrence of $\tau(0)$ in $V$ is expected, and therefore $L(r)=s_r, L(2r)=s_{2r}, \dots, L(T_rr)=s_{T_rr}$ are not all equal. Also because $k\geq T_rr+1$, the maximal blocks of $1$s corresponding to the indices $r, 2r, \dots, T_rr$ all appear in $\tau(0)$. Now consider any occurrence of $\tau(0)$ in $V$. Assume that there are $p$ many maximal blocks of $1$s in $V$ preceding this occurrence of $\tau(0)$. Then $L(p+r)=L(r), L(p+2r)=L(2r), \dots, L(p+T_rr)=L(T_rr)$ are not all equal, and therefore the first occurrence of $v$ in this occurrence of $\tau(0)$ must be expected in $V$. It follows that all the $\tau(0)$-expected occurrences of $v$ in this occurrence of $\tau(0)$ in $V$ must be expected in $V$. 

Now we must have that $V$ contains $01$. Consider an arbitrary occurrence of $\tau(0)\tau(1)$ in $V$. Suppose there are $p$ many maximal blocks of $1$s in $V$ before this occurrence of $\tau(0)\tau(1)$. Since all $\tau(0)$-expected occurrences of $v$ in this occurrence of $\tau(0)$ are expected in $V$, it follows that the occurrence of $01^{t_1}\cdots 1^{t_{r-1}}0$ in this occurrence of $\tau(1)$ must be an expected occurrence of $v$ in $V$. Moreover, $L(p+k/r+r)=t_{r}, L(p+k/r+2r)=t_{2r}, \dots, L(p+k/r+T_rr)=t_{T_rr}$ are not all equal. Again, since $l\geq T_rr+1$, all these maximal blocks of $1$s appear in $\tau(1)$. It follows that, in any occurrence of $\tau(1)$ in $V$, the occurrence of $01^{t_1}\cdots 1^{t_{r-1}}0$ in this occurrence of $\tau(1)$ must be an expected occurrence of $v$ in $V$. 
As a consequence, $j_0\equiv j_1\equiv 0$ mod $r$.

We next claim that $l$ must be a multiple of $r$. To see this, note that $V$ must contain $10$ and therefore an occurrence of $\tau(1)\tau(0)$. We consider a particular occurrence of $\tau(1)\tau(0)$ in $V$. All $\tau(0)$-expected occurrences of $v$ in this occurrence of $\tau(0)$ are expected in $V$, and the occurrence of $01^{t_1}\cdots 1^{t_{r-1}}0$ in this occurrence of $v$ are expected in $V$. It follows that the word in between these expected occurrences of $v$ is built from $v$. Thus $\tau(1)$ is built from $v$, and $l$ is a multiple of $r$.

Thus we have completed the proof of the proposition under the condition that $V$ contains $00$. 

Next we assume that $V$ contains $11$ but not $00$. Consider an arbitrary occurrence of $\tau(1)\tau(1)$ in $V$. Suppose there are $p$ many maximal blocks of $1$s in $V$ before this occurrence of $\tau(1)\tau(1)$. Then the special blocks in the occurrence of $\tau(1)\tau(1)$ have the following indices in $L$:
$$ p+j_0, p+j_1, p+l+j_0, p+l+j_1. $$
Since $L(p+j_0)=t_{j_0}\neq t_{j_1}=L(p+j_1)$ and $L(p+l+j_0)=t_{j_0}\neq t_{j_1}= L(p+l+j_1)$, these indices must be in the same congruence class mod $r$. In particular, $l=(p+l+j_0)-(p+j_0)\equiv 0$ mod $r$. Thus $l$ is a multiple of $r$. 

Similar to the previous case, we note that the first occurrence of $v$ in $\tau(0)$ occurs expectedly in any occurrence of $\tau(0)$ in $V$ because $\tau(0)$ occurs in $V$ as an initial segment and $s\geq T_rr+1$. Note that there must be an occurrence of $01^q0$ in $V$ for some $q\geq 1$. Consider a particular occurrence of $\tau(0)\tau(1)^q\tau(0)$ in $V$. Between the first occurrences of $v$ in the two occurrences of $\tau(0)$ there are $(s-r)+ql$ many maximal blocks of $1$s. Since these occurrences of $v$ are both expected, the word in between them is built from $v$, and thus $(s-r)+ql$ is a multiple of $r$. It follows that $s$ is a multiple of $r$, and $\tau(0)$ is built from $v$. Moreover, the first occurrence of $01^{t_1}\cdots 1^{t_{r-1}}0$ in the first occurrence of $\tau(1)$ in this occurrence of $\tau(0)\tau(1)^q\tau(0)$ is an expected occurrence of $v$. Since $l$ is a multiple of $r$, it follows that $\tau(1)$ is also built from $v$. 

This completes the proof of the proposition, since if $V$ does not contain either $00$ or $11$, then $V=(01)^\infty$ is periodic.
\end{proof}

\begin{lem}\label{lemr3} Let $\zeta$ be an adequate substitution and $V=\lim_n\zeta^n(0)$ be an aperiodic rank one word. Suppose
$\zeta(1)$ starts with $1$. Write $\zeta(1)=1^tu$ where $t\geq 1$ and $u$ starts with $0$. Then one of $\zeta(0)$ and $u$ is an initial segment of the other. 
\end{lem}
\begin{proof}
Since $\zeta(1)$ starts with $1$, $\zeta(1)$ is an initial segment of $\zeta^n(1)$ for all $n\geq 1$. Since $V$ is an aperiodic rank one word, there is a finite word $v$ with $|v|>|\zeta(0)|, |u|$ such that $V$ is built from $v$. By Proposition~\ref{propr}, there is $n\geq 1$ such that both $\zeta^n(0)$ and $\zeta^n(1)$ are built from $v$. Since $\zeta(0)$ is an intial segment of $\zeta^n(0)$, we have that $\zeta(0)$ is an intial segment of $v$. Likewise $\zeta(1)=1^tu$ is an initial segment of $\zeta^n(1)$, and therefore $u$ is also an intial segment of $v$. Thus one of $\zeta(0)$ and $u$ is an initial segment of the other. 
\end{proof}

This lemma can be used to show some words are not rank one. In particular, we obtain another proof that the Morse word is not a rank one word: consider $\zeta^2(0)=0110$ and $\zeta^2(1)=1001$; since $V=\lim_n\zeta^{2n}(0)$ is aperiodic (by Theorem~\ref{periodic}),  and neither $0110$ nor $001$ is an initial segment of the other, it follows from Lemma~\ref{lemr3} that $V$ is not a rank one word.

\subsection{Generalized Morse sequences}
We move forward to analyze the spacer rank of sequences which are more general than those that are fixed points of substitutions. We now recall a definition of generalized Morse sequences, which were  defined in \cite{Ke68}. The Morse sequence is an example of a  generalized Morse sequence.

\begin{defn}
A \textbf{generalized Morse sequence} is an element $w$ of $\{0, 1\}^\mathbb{N}$ that can be written as \[w = b_0 \times b_1 \times b_2 \times \cdots \]
where each $b_i$ is a finite word beginning with $0$, which we call a block.
\end{defn}

Note that when we say $a \times b$ we mean to take copies of $a$ and $a^c$ and concatenate them such that whenever there is a $0$ in $b$, we place $a$, and whenever there is a $1$ in $b$, we place $a^c$. So we get that $01 \times 010 = 011001$.

\begin{lem}[\cite{Ke68}, Lemma 1]
A generalized Morse sequence $w \in \{0,1\}^\N$ with $w = b_0 \times b_1 \times b_2 \times \cdots$ is periodic if and only if  there exists $k$ in $\N$ such that $b_k \times b_{k+1} \times b_{k+2} \times \cdots = 000000\ldots$ or $b_k \times b_{k+1} \times b_{k+2} \times \cdots = 010101\ldots$.
\end{lem}

\begin{prop}
All generalized Morse sequences have an at most spacer rank two construction. If the sequence of blocks is periodic, then the generalized Morse sequence is a fixed point of  a  substitution.
\end{prop}

\begin{proof}
Let $w = b_0 \times b_1 \times b_2 \times \cdots$ be a generalized Morse  sequence. Then consider $v_{1,k} = b_0 \times b_1 \times \cdots \times b_k$ and $v_{2,k} = v_{1,k}^c$. Then we have that $\{v_{1,k}, v_{2,k}\}$ builds $w$ for all $k$ without using any spacers, but this has the issue that the words might begin and end with $1$. This can be fixed by removing the leading and ending $1$'s from the $v_{i,k}$ and adding them back as spacers. So if $v_{1,k} = 1^a w_{1,k} 1^b$ and $v_{2,k} = 1^c w_{2,k} 1^d$ where $a,b,c,d \in \N$ are as large as possible (possibly $0$), then we have that $w$ is built by $\{w_{1,k},w_{2,k}\}$ for all $k$, where instead of just concatenating, whenever we previously has $v_{1,k}v_{2,k}$ we instead have $w_{1,k}1^{b+c}w_{2,k}$ and similarly for the other combinations.
\end{proof}

We note that if a sequence of blocks is eventually periodic, then the spacer rank two construction is simpler to state than in the general case.

\begin{example} \rm{
Consider the generalized Morse  sequence $w = 010 \times 01 \times 01 \times \cdots$. Then let $w_{1,1} = 010$ and $w_{2,1} = 0$, then we can define the next set of words that builds $w$ by the following. If $n$ is odd, 
\[w_{n+1,1}=w_{n,1}1w_{n,2} \text{ and } w_{n+1,2} = w_{n,2}1w_{n,1}.\]
If $n$ is even, then we instead have the following rule,
\[w_{n+1,1}=w_{n,1}11w_{n,2} \text{ and } w_{n+1,2} = w_{n,2}w_{n,1}.\]
This example follows the same rule as Morse, though this depends both on the repeated word and the initial word, so finding other rules would not be difficult.
}\end{example}

\section{Finite Spacer Rank Subshifts}\label{S:finiterank}

In this section we study finite spacer rank subshift systems. First we recall some basic definitions. The space $\{0,1\}^\Z$ is given the product topology where each $\{0,1\}$ has the discrete topology; this topology is metrizable and  $\{0,1\}^\Z$ is a Cantor space. The \textbf{shift map} $\sigma: \{0,1\}^\Z\to\{0,1\}^\Z$ is defined by 
\[\sigma(x){(i)} = x{(i+1)}\text{  
for all }\ i \in \mathbb{Z}.\] This shift is a homeomorphism of $\{0,1\}^\Z$.

\begin{defn} \label{rank one System}
Let $V \in \{0, 1\}^\mathbb{N}$ be an infinite word. We define the system $X_V$, a \textbf{word subshift}, by
$$X_V = \{x \in \{0, 1\}^\mathbb{Z} : \text{ every finite subword of } x \text{ is a subword of } V \}.$$
It is clear that $X_V \subset{\{0, 1\}^\mathbb{Z}}$ is closed, hence  compact,  and invariant under the shift $\sigma$, i.e., $\sigma(X_V)=X_V$. One can  verify that $\sigma$ is a homeomorphism of $X_V$. We say that $(X_V, \sigma)$ is the \textbf{subshift system}, or \textbf{word subshift system}, associated to $V$. We define the \textbf{orbit} of $x \in X$ to be the set $\{x: \sigma^n(x),: n\in\Z\}$.
\end{defn}

We note that a \textit{shift space}, see \cite{LiMa95}, (also called in the literature a \textit{subshift}, see e.g. \cite{Pa22}) is a closed (hence compact) shift invariant subset of $\{0,1\}^\Z$ (or more generally of $\{0,1,\ldots, n-1\}$). It is clear that word subshifts are shift spaces, but Example~\ref{E:notsubshift} shows that there are shift spaces that are not word subshifts.

\begin{example} \rm{
Consider the word
\[V=1011010011001000110001\cdots\]
(The number of zeros grows and the number of 1s alternate between 1 and 2.)
We note that the system $X_V$ consists of the following three kinds of words:
(1) constant 0; (2) bi-infinite words with exactly one 1; (3) bi-infinite words with exactly two consecutive $1$s.
Therefore there is no element of $X_V$ that generates $X_V$.
}\end{example}

\begin{example} \rm{\label{E:notsubshift}
Consider the following bi-infinite word
\[z(n)=1 \text{ for }n<0,\  \text{and }z(n)=0 \text{ for }n\geq 0,\]
and let $Z$ be the closure of the orbit of $z$. 
The system $Z$ consists of the orbit of $z$ plus two more elements: the constant sequence 0 and the constant sequence  1.
We claim that $Z$  is not generated by any infinite word $V$, i.e., $Z$ is not of the form $X_V$ for some $V$. We proceed by contradiction and assume that $Z=X_V$ for some infinite word $V$. Then $V$ contains all finite words of the form $1^n0^m$ for $n, m$ nonnegative integers. So there are infinitely many occurrences of $01$ in $V$. It follows that $Z$ must contain an element in which 01 occurs, a contradiction.
}\end{example}




One can verify that if $V \in \{0,1\}^\mathbb{N}$ is an infinite word, then the system $(X_V, \sigma)$   associated to $V$   is nonempty, and if $V$ is recurrent, then $X_V$ is finite or a Cantor set. Also,  for any finite subword and any position, there has to be an element in $X$ that contains the subword at that specific position.

\begin{defn} \label{rank n systems}
Let $n\geq 2$.  A subshift $X$ is a \textbf{spacer rank}-$\mathbf{n}$\textbf{ system} if there is a word $V$ with a spacer rank-$n$ construction such that $X=X_V$ and there is no word $W$ with a  spacer rank-$(n-1)$ construction such that $X=X_W$.
The system is a \textbf{(spacer) rank-one system} if there is a (spacer) rank-one word $V$ such that $X=X_V$.

A word $V \in \{0, 1\}^\mathbb{N}$ is said to have \textbf{system spacer rank} $\mathbf{n}$ if $X_V$ is a spacer rank-$n$ system.
\end{defn}


There are many infinite words that are associated to the same system (see e.g. Example~\ref{e:tworanks}). Two different infinite words of different spacer ranks can be associated to the same symbolic shift system. In fact,  any symbolic shift system $(X, \sigma)$ is associated to infinitely many words.

\begin{example}\label{e:tworanks}  \rm{
We can obtain infinite words that are not  rank-one by adding a $0$ to the beginning of the Chac\'on sequence or by removing the first $0$ of the Chac\'on sequence.
Both words are clearly not rank one. However, the system associated to both words is rank one since it is the same system that is associated to the Chac\'on sequence.
}\end{example}

The example above shows a  rank-one system that is trivially associated to a word that is not  rank-one.
Given a rank-one word, the system associated to it is always rank one, though given a word with a spacer rank-$n$ construction, it is not obvious whether its associated system is of spacer rank $n$.
The proofs of the following lemma and theorem follow from the definitions and are left to the reader.

\begin{lem} \label{X_V subset X_W lemma}
For any infinite words $V$ and $W$, let $(X_V, \sigma)$ be the system associated to $V$ and $(X_W, \sigma)$ be the system associated to $W$. If every finite subword of $V$ is a subword of $W$, then $X_V \subset X_W$.
\end{lem}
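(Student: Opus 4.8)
The plan is to prove the containment directly from the two set definitions by chaining the sub-word relation transitively. I would take an arbitrary bi-infinite point $x \in X_V$ and verify the membership criterion for $X_W$, namely that every finite sub-word of $x$ is a sub-word of $W$.

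First I would fix an arbitrary finite sub-word $u$ of $x$. Since $x \in X_V$, the defining property of $X_V$ immediately gives that $u$ is a sub-word of $V$. Next I would invoke the hypothesis, that every finite sub-word of $V$ is a sub-word of $W$, and apply it to $u$; this yields that $u$ is a sub-word of $W$. Since $u$ was an arbitrary finite sub-word of $x$, this establishes that every finite sub-word of $x$ is a sub-word of $W$, which is exactly the condition for $x \in X_W$. As $x \in X_V$ was arbitrary, we conclude $X_V \subseteq X_W$.

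The whole argument is simply transitivity of the sub-word relation routed through the two set definitions, so there is no genuine obstacle. The one point I would be careful about is that the membership conditions for $X_V$ and $X_W$ quantify over all \emph{finite} sub-words of $x$, so the chaining must be carried out for an arbitrary finite sub-word $u$ and not for $x$ itself; since $x$ is bi-infinite, $x$ is not a (finite) sub-word of $V$ or $W$, and it is only the finite sub-words of $x$ that the definitions constrain. With that caveat observed, the conclusion follows in a single pass.
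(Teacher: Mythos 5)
Your proof is correct and is essentially identical to the paper's own argument: both take an arbitrary $x \in X_V$, pass an arbitrary finite sub-word of $x$ through $V$ to $W$ by transitivity, and conclude $x \in X_W$. The caveat you note about restricting attention to finite sub-words is exactly the right reading of the definitions and matches how the paper uses them.
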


\begin{thm} \label{X_V subset X_W prop}
For any infinite words $V$ and $W$, let $(X_V, \sigma)$ be the system associated to $V$ and $(X_W, \sigma)$ be the system associated to $W$. $X_V \subset X_W$ if and only if every finite subword $v$ that appears infinitely often in $V$ appears infinitely often in $W$.
\end{thm}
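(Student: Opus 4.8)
The plan is to isolate a single key fact and use it for both directions: \emph{for any infinite word $U$ and any bi-infinite $x \in X_U$, every finite sub-word of $x$ appears infinitely often in $U$}. Granting this, the theorem follows quickly, so I would prove it first. Fix $x \in X_U$ and a finite sub-word $u$ of $x$, say $u = x_{(i+1)} \cdots x_{(i+n)}$. For each $m \geq 1$ the longer window $w_m = x_{(i+1-m)} \cdots x_{(i+n+m)}$ is again a finite sub-word of $x$, hence a sub-word of $U$ by the definition of $X_U$. If $w_m$ occurs in $U$ starting at position $p_m$, then the copy of $u$ sitting at offset $m$ inside $w_m$ occurs in $U$ at position $p_m + m$. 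Since the positions of the one-sided word $U$ are bounded below, $p_m + m \to \infty$, so $u$ occurs at arbitrarily large positions of $U$; that is, $u$ appears infinitely often in $U$.

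For the ($\Leftarrow$) direction, I assume every finite sub-word that appears infinitely often in $V$ also appears infinitely often in $W$, and take any $x \in X_V$. By the key fact (applied with $U = V$), every finite sub-word $u$ of $x$ appears infinitely often in $V$; by hypothesis $u$ then appears infinitely often in $W$, and in particular $u$ is a sub-word of $W$. Thus every finite sub-word of $x$ is a sub-word of $W$, so $x \in X_W$, giving $X_V \subseteq X_W$. Note that without the ``infinitely often'' strengthening the hypothesis would be too weak to invoke, which is precisely why the key fact is needed here.

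For the ($\Rightarrow$) direction, I assume $X_V \subseteq X_W$ and let $v$ be a finite sub-word appearing infinitely often in $V$. The goal is to produce a point $x \in X_V$ that contains $v$, after which the key fact applied with $U = W$ finishes the argument: since $x \in X_V \subseteq X_W$ and $v$ is a sub-word of $x$, $v$ appears infinitely often in $W$. To build such an $x$ I would reuse the compactness construction of Proposition~\ref{non-degenerate rank-one system non-empty}: set $z_{(i)} = V_{(i)}$ for $i > 0$ and $z_{(i)} = 1$ for $i \leq 0$, and because $v$ occurs infinitely often in $V$ extract a subsequence of the shifts $\{\sigma^n(z)\}$ along which $v$ sits at a fixed position; a convergent subsequence then limits to some $x'$ containing $v$ at that position. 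Any finite sub-word of $x'$ eventually lies in the $V$-part of the approximating shifts, hence is a sub-word of $V$, so $x' \in X_V$.

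The main obstacle is really the key fact rather than either implication of the equivalence: the content of the theorem is the observation that a finite pattern occurring only finitely often in the one-sided word $V$ can never be ``seen'' by a bi-infinite element of $X_V$, since any bi-infinite occurrence would force copies of the pattern at unboundedly large coordinates of $V$. Once this is in hand both directions are short, and the point constructed for ($\Rightarrow$) is essentially the one already produced earlier in the paper.
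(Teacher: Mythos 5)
Your proposal is correct, and it takes a genuinely different---and in one respect more careful---route than the paper. The paper proves the forward direction by contrapositive: using the same compactness construction you describe, it builds a point $x \in X_V$ with $v$ at the $0$th digit, and then, letting $k$ be the position of the last occurrence of $v$ in $W$, observes that the window of $x$ consisting of $v$ preceded by $k+1$ symbols cannot be a sub-word of $W$, so $x \notin X_W$. You instead argue directly, feeding the constructed point into your key fact applied with $U = W$. For the backward direction the paper simply cites Lemma~\ref{X_V subset X_W lemma}, whose hypothesis---\emph{every} finite sub-word of $V$ is a sub-word of $W$---is strictly stronger than the theorem's hypothesis about sub-words appearing infinitely often; as written, that citation leaves exactly the gap you point out. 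Your key fact (every finite sub-word of a point of $X_U$ occurs at arbitrarily large positions of $U$, proved by growing the window symmetrically and noting that the starting positions in the one-sided word are bounded below) is precisely what is needed to bridge that gap, and it is stated nowhere in the paper. So your unified argument buys rigor in the backward direction and a clean symmetry between the two implications, at the cost of isolating one extra lemma; the paper's contrapositive argument for the forward direction buys a concrete witness---an explicit finite window of $x$ that is absent from $W$---without ever formulating the general fact.
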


\begin{cor}
Let  $V$ and  $W$ be two  infinite words.  $W$. If $V$ and $W$  differ in finitely many digits, then $X_V = X_W$ 
\end{cor}
 


\begin{cor} \label{shift infinite word}
Let  $V$ and  $W$ be two  infinite words. If there exist $i, j \in \mathbb{N}$ such that \[V{(i)}V{(i+1)}V{(i+2)}\cdots = W{(j)}W{(j+1)}W{(j+2)}\cdots,\] then $X_V = X_W$. \end{cor}


\begin{cor}\label{cor:equivalent-systems}
Let  $V$ and  $W$ be two  infinite words. Then $X_V = X_W$ if and only if $V$ and $W$ have the same set of finite words that appear infinitely often in them.
\end{cor}

\begin{example} \rm{ While Proposition~\ref{P:allfiniterank} shows the existence of words with a proper spacer rank-$n$ construction for each $n$, here we give a natural example of a word with proper spacer rank-three construction.

Let
$$w_{0,1}=w_{0,2}=w_{0,3}=0,$$
$$w_{1,1}=01^30,\ w_{1,2}=01^40,\ w_{1,3}=01^50,$$
and for $n\ge1$,
$$\begin{array}{l} w_{n+1,1}=w_{n,1}(w_{n,2})^3w_{n,3}, \\
w_{n+1,2}=w_{n,2}(w_{n,3})^4w_{n,1},\\
w_{n+1,3}=w_{n,3}(w_{n,1})^5w_{n,2}.
 \end{array}
 $$
It is clear that this is a proper spacer rank-three construction for $W=\lim_{i\rightarrow \infty}w_{i,1}$. We claim that there is no lower spacer rank construction. We sketch a proof below. First note the following basic properties of $W$:
\begin{enumerate}
\item[(a)] $W$ starts with $01$, and all other occurrences of $0$s in $W$ are in blocks of size $2$;
\item[(b)] All occurrences of $1$s in $W$ are in blocks of size $3$, $4$ or $5$, and $W$ is uniquely readable as a word built from $w_{1,1}$, $w_{1,2}$ and $w_{1,3}$;
\item[(c)] If $x$ and $y$ are finite words that begin and end with $0$ and $|x|, |y|>30$, then there is at most one $a\in\{3,4,5\}$ such that $x1^ay$ is a subword of $W$.
\end{enumerate}
Also, $W$ is recurrent from the proper spacer rank-three construction given above, thus $X_W$ is a perfect set, and in particular $W$ is not eventually periodic. To see that $W$ is rank-three, we verify that for any words $u,v$ beginning and ending with $0$ and $|u|,|v|>30$, $W$ is not built from $u,v$. Toward a contradiction, assume
$$ W=w_01^{a_0}w_11^{a_1}\cdots $$
where $w_i\in\{u,v\}$ and $a_i\geq 0$ for all $i\in\mathbb{N}$.
Without loss of generality, assume $w_0=u$. Then $u$ begins with $01$. By (a) we have cases where $u$ ends with either $00$ or $10$, $v$ begins with either $01$ or $00$, and $v$ ends with either $00$ or $10$. By checking the cases, we conclude that we must have that both $u$ and $v$ begin with $01$ and end with $10$. For instance, consider the case $u$ ends with $00$ and $v$ ends with $10$. In this case it follows from (a) that for any $i\geq 1$, if $w_i=v$ then $w_{i+1}=u$ and $a_i=0$. Also by (c) there is a unique $a\in\{3,4,5\}$ such that $u1^av$ can be a subword of $W$. Thus
$$ W=u1^avu1^av\cdots $$
which is periodic, a contradiction.

Thus each of $u$ and $v$ is built from $w_{1,1}$, $w_{1,2}$ and $w_{1,3}$.

Define a substitution scheme $\zeta:\{0,1,2\}\to\{0,1,2\}^{<\mathbb{N}}$ by $$\zeta(0)=01^32,\ \zeta(1)=12^40,\ \zeta(2)=20^51. $$
Let $V=\lim_{n\rightarrow \infty}\zeta^n(0)$. Then $V$ represents the building of $W$ with the correspondence $0\mapsto w_{1,1}$, $1\mapsto w_{1,2}$ and $2\mapsto w_{1,3}$.
By the above discussion, we obtain two finite words $p,q\in \{0,1,2\}^{<\mathbb{N}}$ (corresponding to $u, v$ respectively) such that $V$ is built from $p, q$ without spacers, that is, 
$$ V=r_0r_1\cdots $$
where $r_i\in\{p,q\}$ for all $i\geq 0$. 

Let $p_0,q_0\in\{0,1,2\}^{<\mathbb{N}}$ be two finite words with the least value of $|p_0|+|q_0|$ such that $V$ is built from $p_0, q_0$ without spacers. It can be argued that either $p_0$ cannot be written as the form $\zeta(r)$ for some word $r\in\{0,1,2\}^{<\mathbb{N}}$ or $q_0$ cannot be written as the form $\zeta(r)$ for some word $r$. Finally, by straightforward but tedious calculations, we can see that $V$ is eventually periodic, which implies that $W$ is eventually periodic, a contradiction.

}\end{example}

We conclude this section with a computation of the topological entropy for some examples. In Theorem~\ref{P:tefr} we show that all shifts defined by finite spacer rank words have topological entropy zero, and in Example~\ref{E:polyunranked} we construct a word that does not have a spacer rank construction whose corresponding system cannot be defined by any spacer ranked word, and show that the system has topological entropy zero. We have seen that $P_n(V)$ is the complexity function of an infinite word $V$. If $X$ is a subshift its complexity $P_n(X)$ is defined to be the number of words of length $n$ in any $x\in X$ (see e.g., \cite{Kr03}). One can verify that $(\log P_n(X))_n$ is a subadditive sequence, so the limit $\lim_{n\to\infty} \frac{\ln P_n(X)}{n}$ exists and is defined to be the topological entropy of $X$. One can verify that in our case $P_n(X_V)=P_n(V)$, so the topological entropy of $(X_V,\sigma)$ is $\lim_{n\to\infty} \frac{\ln P_n(X)}{n}$.

\begin{thm}\label{P:tefr} 
If $V$ is an infinite word with a finite spacer rank construction, then the topological entropy of $(X_V,\sigma)$ is 0.
\end{thm}

\begin{proof}

If $V$ is a spacer rank-$m$ word, fix a spacer rank-$m$ construction $v_{i,j}, i\in\N$, $1\leq j\leq m$. For
$\varepsilon > 0$, there exist $k\in\N$, such that  $ \frac{\ln(m+k)}{
k}< \varepsilon$. Let  $i_0\in\N$ be such that
$\min_{1\leq j\leq m} |v_{i_0,j} | > k$, and define $k' = \max_{1\leq j\leq m} |v_{i_0,j}| $.  

There exists $N\in\N$
such that for every $n > N$
\[n + k_1 + k < \frac32 n,\text{ and } \frac{\ln(k_1+1)}{n}<\varepsilon.\]
 Then we define
$\zeta : \{0, 1, 2,\cdots ,m + k\} \to \{0,1\}^{<\N}$, by

\begin{equation}
\zeta(i) = 
\begin{cases}
1^i &  \text{ if } 0\leq i\leq k,\\
 v_{i_0,i-k}  &  \text{ if } k+1\leq i \leq m+k.
\end{cases}
\end{equation}

 Let $n_1 = 2([\frac{n+k_1}{k}] + 1)$. For $s\in \{0, 1, 2,\cdots ,m + k\}^{n_1}$ such that
$s(a)\geq k$ or $s(a + 1)\geq k$ for any $0\leq a\leq n_1-2, 0\leq b\leq k_1$, define $\phi(b, s)\in \{0,1\}^n$
by  \[\phi(b, s)(i) =\zeta(s)(i+b),\text{ for } 0\leq i\leq n-1.\] 

We can see that  for any $x\in X_V$,
and any subword $v$ of $x$, if $|v| = n$, then there exist $s\in  \{0, 1, 2,\cdots ,m+k\}^{n_1}$ and $0\leq b\leq k_1$ 
such that $\phi(b, s) = v$. Finally,
\begin{align*}
\frac{\ln((k_1+1)(m+k)^{2([\frac{n+k_1}{k}+1)}} {n}
&\leq \frac{\ln(k_1+1)}{n} + 3\frac{\ln(m+k)}{k}\\
&\leq 4\varepsilon.
\end{align*}
Thus the topological entropy of $X_V$ is 0.
\end{proof}

\begin{example} \rm{ \label{E:polyunranked} We construct a subshift of zero topological entropy that is not defined by any word with a spacer rank construction.   
Let
\begin{align*}
v_k&=01^k01^k0\text{ and }u_k=v_1v_2\cdots v_k, \text{ for }k\geq 1, \text{ and let }\\
V&= u_1u_2\cdots u_k\cdots \\
&= v_1v_1v_2\cdots v_1v_2\cdots v_k\cdots
 \end{align*}
We first show that the word $V$ has polynomial complexity, hence zero topological entropy. We   compute the complexity function $P_n(V)$. Let $S_n$ be the set of all subwords of $V$ of length $n$. Consider the finite word
$$ w=u_1u_2\cdots u_n=v_1v_1v_2\cdots v_1v_2\cdots v_n. $$
Then $S_n$ is exactly the set of all subwords of $w$ of length $n$. Since $|w|$ is a polynomial in $n$, we have that $P_n(V)=|S_n|$ is a polynomial in $n$. This implies that the topological entropy of $X_V$ is $0$.

Now let $S$ be the set of all finite words $w$ which occurs infinitely many times in $V$. Then $v_k\in S$ for all $k\geq 1$. It is easy to see that for any $w\in \{0,1\}^{<\mathbb{N}}$, $w\in S$ if and only if there is some $x\in X_V$ such that $w$ is a subword of $x$. Now suppose $W$ is an infinite word with $X_V=X_W$. We show that for all $k\geq 1$, $v_k$ is a subword of $W$. Then by Lemma 2.7, $W$ does not have a spacer rank construction. Fix $k\geq 1$. Since $v_k\in S$, we have that there is some $x\in X_V=X_W$ such that $w$ is a subword of $x$. Now $x\in X_W$ and $w$ is a subword of $x$, we must have that $w$ is a subword of $W$.
This proves that $X_V$ is not spacer ranked as a system.

}\end{example}

\begin{remark} \rm{If $C$ is the Chac\'on word, the word  $1C$ does not have a spacer rank construction for the trivial reason that it starts with $1$.
Note also that as we have previously seen, the word $0C$ has a spacer rank-two construction but is not (spacer) rank one.
However, we have that $X_C=X_{1C}=X_{0C}$, so they all define subshifts of rank one.
At the same time we have shown that if $V$ is a full complexity word and $U$ is any word such that $X_V=X_U$, then $U$ does not have a spacer rank construction.
}\end{remark}

\section{Spacer Rank for Sturmian Sequences}\label{S:sturmian}

We now consider a class of sequences called Sturmian sequences––of which the Fibonacci sequence is an example––and discuss the spacer rank of such sequences.
In Corollary~\ref{C:sturmr2} we prove that all Sturmian sequences define spacer rank-two systems.

\begin{defn}
A \textbf{Sturmian sequence} \cite{MH40} is an element $V$ of $\{0, 1\}^\mathbb{N}$ such that the number of words of length $n$ that are  subwords of $V$ is $n + 1$.
\end{defn}
We refer the reader to \cite{Fo02} for properties of the Sturmian sequence. The main properties of Sturmian sequences we use are the following:
\begin{itemize}
\item Sturmian sequences are not eventually periodic.
\item Sturmian sequences are \textbf{balanced}, meaning that for all $n$ and for any two subwords of length $n$, the nonnegative difference between the number of $1$'s between those two subwords of length $n$ is at most $1$.
\item In any Sturmian sequence, either $11$ appears as a subword or $00$ appears as a subword, but not both. If $11$ appears, we say that the sequence is $\textbf{type 1}$ and if $00$ appears, we say that the sequence is $\textbf{type 0}$.
\item If $V\in\{0,1\}^\mathbb{N}$ is a Sturmian sequence, then $V$ is recurrent.
\end{itemize}

We note that in  \cite{Fo02}, Sturmian sequences are characterized by the properties of being balanced and eventually periodic, but in the sequel we do not need this result.
 Sturmian systems are known to be minimal and uniquely ergodic \cite[Proposition 3.2.10]{Th20}, and with their unique invariant measure they are measurably conjugate to an irrational rotation \cite[Corollary 3.2.13]{Th20}, which we know is measurably rank one.

In the following we prove that any recurrent, balanced, non-eventually periodic infinite word starting with $0$ has a proper spacer rank-two construction. We also show that any Sturmian sequence (regardless of whether it begins with $0$ or with $1$) generates a spacer rank-$n$ subshift.

\begin{lem}\label{lem:VW} Let $V\in\{0,1\}^\mathbb{N}$ be a recurrent, balanced, non-eventually periodic word. If $00$ is a subword of $V$, then there exist a recurrent, balanced, non-eventually periodic word $W\in\{0,1\}^\mathbb{N}$ such that $00$ is a subword of $W$, $a_0,a_1\in\mathbb{N}^+$ such that $|a_0-a_1|=1$, and $0\leq b\leq \max\{a_0, a_1\}$ such that $$V=0^b10^{a_{W{(0)}}}10^{a_{W{(1)}}}10^{a_{W{(2)}}}1\cdots.$$
If $11$ is a subword of $V$, then there exist a recurrent, balanced, non-eventually periodic word $W\in\{0,1\}^\mathbb{N}$ such that $00$ is a subword of $W$, $a_0,a_1\in\mathbb{N}^+$ such that $|a_0-a_1|=1$, and $0\leq b\leq \max\{a_0, a_1\}$ such that $$V=1^b01^{a_{W{(0)}}}01^{a_{W{(1)}}}01^{a_{W{(2)}}}0\cdots.$$
\end{lem}

\begin{proof} First suppose $00$ is a subword of $V$. 
There exists a unique $b\in\mathbb{N}$ such that $0^b1$ is an initial segment of $V$. Since $V$ is balanced, there exists a unique $n\in\mathbb{N}^+$ such that between any two occurrences of $1$s in $V$ there can occur either $n$ or $n + 1$ many consecutive $0$s. Fix such an $n\in \mathbb{N}^+$. So $V$ can be written uniquely as
$$0^b10^{c_0}10^{c_1}1\cdots$$
where $c_i=n$ or $c_i=n+1$ for all $i\in\mathbb{N}$. Since $V$ is not eventually periodic, there exists $i\in\mathbb{N}$ such that $c_i=c_{i+1}$. Define $a_0=c_i$ where $i$ is the least such that $c_i=c_{i+1}$. Define $a_1=n+1$ if $a_0=n$, and $a_1=n$ if $a_0=n+1$. Then we can define $W$ in an obvious way. By definition, $00$ is a subword of $W$. Since $V$ is balanced, we have $0\leq b\leq n+1$.

We verify that $W$ is recurrent, balanced, and not eventually periodic. Since $V$ is recurrent and not eventually periodic, it is clear that $W$ is recurrent and not eventually periodic. Assume $W$ is not balanced. Let $m$ be the least integer such that there exist two subwords $v,w$ of $W$ of length $m$ so that the nonnegative difference between the numbers of $1$s in $v$ and in $w$ is at least $2$. Then we must have $m>1$, and by the minimality of $m$ the nonnegative difference between the numbers of $1$s in $v$ and in $w$ is exactly $2$. Define
$$\begin{array}{l}v_1=0^{a_{v(0)}}10^{a_{v(1)}}\cdots10^{a_{v(m-1)}}, \\ v_2=10^{a_{w(0)}}10^{a_{w(1)}}\cdots10^{a_{w(m-1)}}1, \\ v_3=0^{a_{w(0)}}10^{a_{w(1)}}\cdots10^{a_{w(m-1)}}, \\ v_4=10^{a_{v(0)}}10^{a_{v(1)}}\cdots10^{a_{v(m-1)}}1.
\end{array}
$$
Observe that the nonnegative difference between the numbers of $0$s in $v_1$ and in $v_2$ is 2, and the nonnegative difference between the numbers of $0$s in $v_3$ and in $v_4$ is 2. Also, either $|v_1|=|v_2|$ or $|v_3|=|v_4|$. Since all these words occur in $V$, we have at least one pair of words of the same length whose numbers of $1$s differ by $2$, contradicting the assumption that $V$ is balanced.

The case where $11$ is a subword of $V$ is similar.
\end{proof}

\begin{lem}\label{lem:VW2} If $V\in\{0,1\}^\mathbb{N}$ is a recurrent, balanced, non-eventually periodic word and $00$ is a subword of $V$, then there exist a recurrent, balanced, non-eventually periodic word $W\in\{0,1\}^\mathbb{N}$ such that $00$ is a subword of $W$, $n_0,n_1\in\mathbb{N}$, and finite words $v_0,v_1$ that end with $0$, such that $$V=v_{W{(0)}}1^{n_{W{(0)}}}v_{W{(1)}}1^{n_{W{(1)}}}v_{W{(2)}}1^{n_{W{(2)}}}\cdots.$$ Moreover, if $1$ is not a subword of $v_0$ or $v_1$, then $n_0\ne0$ or $n_1\ne0$; when $V$ begins with $0$, $v_0,v_1$ begin and end with $0$.
\end{lem}

\begin{proof} We consider several cases.

Case (1): $V$ begins with 1. Let $W$ and $a_0,a_1\in\mathbb{N}^+$ be obtained by Lemma~\ref{lem:VW}. Thus
$$ V=10^{a_{W{(0)}}}10^{a_{W{(1)}}}10^{a_{W{(2)}}}1\cdots.$$
 Let $v_0=10^{a_0}$ and $v_1=10^{a_1}$. Let $n_0=n_1=0$. Then we have $$V=v_{W{(0)}}1^{n_{W{(0)}}}v_{W{(1)}}1^{n_{W{(1)}}}v_{W{(2)}}1^{n_{W{(2)}}}\cdots $$
 as desired.

Case (2): $V$ begins with 0. Let $W$, $a_0,a_1\in\mathbb{N}^+$ and $b\in \mathbb{N}$ be obtained by Lemma~\ref{lem:VW}. Suppose $\{a_0,a_1\}=\{n,n+1\}$. Then $0<b\leq n+1$.

Subcase (2.1): $b<n$. In this subcase let $v_0=0^b10^{a_0-b}$ and $v_1=0^b10^{a_1-b}$. Let $n_0=n_1=0$. Then
$$\begin{array}{rcl} V&=&0^b10^{a_{W{(0)}}}10^{a_{W{(1)}}}10^{a_{W{(2)}}}1\cdots \\
&=& 0^b10^{a_{W{(0)}}-b}0^b10^{a_{W{(1)}}-b}0^b10^{a_{W{(2)}}-b}0^b1\cdots \\
&=&v_{W{(0)}}1^{n_{W{(0)}}}v_{W{(1)}}1^{n_{W{(1)}}}v_{W{(2)}}1^{n_{W{(2)}}}\cdots.
\end{array} $$

Subcase (2.2): $b=n$ and $a_0=n$. In this subcase let $v_0=0^n$ and $v_1=0^n10$. Let  $n_0=1$ and $n_1=0$. Then
$$\begin{array}{rcl} V&=&0^n10^{a_{W{(0)}}}10^{a_{W{(1)}}}10^{a_{W{(2)}}}1\cdots \\
&=& (0^n10)0^{a_{W{(0)}}-1}10^{a_{W{(1)}}}10^{a_{W{(2)}}}1\cdots \\
&=&v_{W{(0)}}1^{n_{W{(0)}}}v_{W{(1)}}1^{n_{W{(1)}}}v_{W{(2)}}1^{n_{W{(2)}}}\cdots.
\end{array} $$

Subcase (2.3): $b=n$ and $a_1=n$. In this subcase let $v_0=0^n10$ and $v_1=0^n$. Let $n_0=0$ and $n_1=1$. Then
$$\begin{array}{rcl} V&=&0^n10^{a_{W{(0)}}}10^{a_{W{(1)}}}10^{a_{W{(2)}}}1\cdots \\
&=& (0^n10)0^{a_{W{(0)}}-1}10^{a_{W{(1)}}}10^{a_{W{(2)}}}1\cdots \\
&=&v_{W{(0)}}1^{n_{W{(0)}}}v_{W{(1)}}1^{n_{W{(1)}}}v_{W{(2)}}1^{n_{W{(2)}}}\cdots.
\end{array} $$

Subcase (2.4): $b=n+1$. This subcase requires a different treatment. Consider the word $V'=1V$. It is easy to check that $V'$ is a recurrent, balanced, non-eventually periodic word such that $00$ is a subword of $V'$. Let $W, a_0, a_1$ be obtained by Lemma 2 for $V'$. Let $v_0=0^{a_0}$ and $v_1=0^{a_1}$. Let $n_0=n_1=1$. Then

$$\begin{array}{rcl} V'=1V&=&10^{a_{W{(0)}}}10^{a_{W{(1)}}}10^{a_{W{(2)}}}1\cdots \\
&=&1v_{W{(0)}}1^{n_{W{(0)}}}v_{W{(1)}}1^{n_{W{(1)}}}v_{W{(2)}}1^{n_{W{(2)}}}\cdots.
\end{array} $$

\end{proof}

\begin{thm} \label{thm:Stu}
If $V\in\{0,1\}^\mathbb{N}$ is a recurrent, balanced, non-eventually periodic word beginning with $0$, then $V$ has a proper spacer rank-two construction.
\end{thm}

\begin{proof} Assume first $00$ is a subword of $V$. We inductively define a sequence of infinite words $(V_i)_{i\in\mathbb{N}}$, sequences of finite words $(u_i)_{i\in\mathbb{N}}$ and $(v_i)_{i\in\mathbb{N}}$, and $\{0,1\}$-sequences $(m_i)_{i\in\mathbb{N}}$ and $(n_i)_{i\in\mathbb{N}}$ as follows. Let $V_0=V$. For $i\ge 0$, if we have defined $V_i$, let $V_{i+1}$ denote the word $W$ obtained from $V_i$ by Lemma~\ref{lem:VW2}. For each $i\geq 0$, let $u_i$, $v_i$, $m_i$, $n_i$ be, respectively, the finite words $v_0$, $v_1$ and the $\{0,1\}$-bits $n_0$, $n_1$ obtained from $V_i$ by Lemma 3. In addition, we denote $s_i=|u_i|$ and $t_i=|v_i|$.

We inductively define finite words $w_{i,1}, w_{i,2}$ for $i\geq 0$ and $\{0,1\}$-bits $p_{i,1}$, $p_{i,2}$ for $i\geq 1$ in the following. The words $w_{i,1}$, $w_{i,2}$ will give a proper spacer rank-two construction for $V$.

Define
$$\begin{array}{ll}
w_{0,1}=w_{0,2}=0, & \\
w_{1,1}=u_0,\ w_{1,2}=v_0, & p_{1,1}=m_0,\ p_{1,2}=n_0,
\end{array}
$$
and for $i\ge1$, $$\begin{array}{l}
w_{i+1,1}=w_{i,{u_i}_{(0)}+1}1^{p_{i,{u_i}_{(0)}+1}}w_{i,{u_i}_{(1)}+1}1^{p_{i,{u_i}_{(1)}+1}}\cdots w_{i,{u_i}_{(s_i-1)}+1}(1^{p_{i,1}}w_{i,2})^{m_i}, \\ w_{i+1,2}=w_{i,{v_i}_{(0)}+1}1^{p_{i,{v_i}_{(0)}+1}}w_{i,{v_i}_{(1)}+1}1^{p_{i,{v_i}_{(1)}+1}}\cdots w_{i,{v_i}_{(t_i-1)}+1}(1^{p_{i,1}}w_{i,2})^{n_i},\\
p_{i+1,1}=p_{i,m_i+1},\ p_{i+1,2}=p_{i,n_i+1}.
\end{array}$$

We claim that for any $i\geq 1$, if ${V_{i}}{(0)}=0$ then $w_{i,1}1^{p_{i,1}}$ is an initial segment of $V$ and if ${V_{i}}{(0)}=1$ then $w_{i,2}1^{p_{i,2}}$ is an initial segment of $V$. We prove this claim by induction on $i\geq 1$. For $i=1$ this follows from Lemma~\ref{lem:VW2}. In fact, in the notation of Lemma~\ref{lem:VW2} we know that $V=V_0$ has $v_{{V_1}{(0)}}$ as its initial segment, which is $u_0=w_{1,1}$ if ${V_1}{(0)}=0$ and is $v_0=w_{1,2}$ if ${V_1}{(0)}=1$. In general, suppose the claim is true $i\geq 1$. We prove the claim for $i+1$. Suppose first ${V_{i+1}}{(0)}=0$. Then in the notation of Lemma~\ref{lem:VW2}, $v_{{V_{i+1}}{(0)}}=u_i$, and therefore $u_i1^{m_i}$ is an initial segment of $V_i$. From the inductive hypothesis we know that $w_{i,{u_i}{(0)}+1}1^{p_{i,{u_{i}}{(0)}+1}}$ is an initial segment of $V$.
Now let $U$ be the infinite word such that 
$$ V=w_{i,{u_i}{(0)}+1}1^{p_{i,{u_{i}}{(0)}+1}}U. $$
Note that $V_i={u_i}{(0)}U_i$, and thus we may apply the inductive hypothesis to ${U_i}(0)$ to conclude that $w_{i,{u_i}{(1)}+1}1^{p_{i,{u_{i}}{(1)}+1}}$ is an initial segment of $U$. Repeating this for every digit of $u_i1^{m_i}$, we conclude that $w_{i+1,1}$ is an initial segment of $V$. It follows easily that $w_{i+1,1}1^{p_{i+1,1}}$ is an initial segment of $V$. The argument for the case ${V_{i+1}}{(0)}=1$ is similar. The claim is thus proved. 

From the claim it follows that for all $i\geq 1$, either $w_{i, 1}$ or $w_{i,2}$ is an initial segment of $V$. Renaming $w_{i,1}$ and $w_{i,2}$ so that for all $i\geq 1$ we always have $w_{i,1}$ is an initial segment of $V$. The resulting sequence $\{w_{i,j}\}_{i\in\mathbb{N},1\le j\le2}$ is a proper spacer rank-two construction of $V$.

Next assume $11$ is a subword of $V$. By Lemma~\ref{lem:VW}, there exist a recurrent, balanced, non-eventually periodic word $W\in\{0,1\}^\mathbb{N}$ such that 00 is a subword of $W$, and $a_0,a_1\in\mathbb{N}^+$ such that $|a_0-a_1|=1$, such that $$V=01^{a_{W(0)}}01^{a_{W(1)}}01^{a_{W(2)}}0\cdots.$$ 
Similar to the first part of this proof, we define inductively $(V_i)_{i\in\mathbb{N}}$, $(u_i)_{i\in\mathbb{N}}$, $(v_i)_{i\in\mathbb{N}}$, $(m_i)_{i\in\mathbb{N}}$ and $(n_i)_{i\in\mathbb{N}}$ as follows. Let $V_0=W$. For $i\ge 1$, let $V_{i+1}$ denote the word $W$ obtained from $V_i$ by Lemma~\ref{lem:VW2}. For each $i\geq 0$, let $u_i$, $v_i$, $m_i$, $n_i$ be, respectively, the finite words $v_0$, $v_1$ and the $\{0,1\}$-bits $n_0$, $n_1$ obtained from $V_i$ by Lemma 3. In addition, we denote $s_i=|u_i|$ and $t_i=|v_i|$.

Define 
$$ w_{0,1}=w_{0,2}=0,\ \ \  p_{0,1}=a_0,\ p_{0,2}=a_1.$$
For $i\ge0$, define $$\begin{array}{l}
w_{i+1,1}=w_{i,{u_i}{(0)}+1}1^{p_{i,{u_i}{(0)}+1}}w_{i,{u_i}{(1)}+1}1^{p_{i,{u_i}{(1)}+1}}\cdots w_{i,{u_i}{(s_i-1)}+1}(1^{p_{i,1}}w_{i,2})^{m_i}, \\ w_{i+1,2}=w_{i,{v_i}{(0)}+1}1^{p_{i,{v_i}{(0)}+1}}w_{i,{v_i}{(1)}+1}1^{p_{i,{v_i}{(1)}+1}}\cdots w_{i,{v_i}{(t_i-1)}+1}(1^{p_{i,1}}w_{i,2})^{n_i},\\
p_{i+1,1}=p_{i,m_i+1},\ p_{i+1,2}=p_{i,n_i+1}.
\end{array}$$ 
Then by a similar argument as above, for each $i\geq 0$, either $w_{i,1}$ or $w_{i,2}$ is an initial segment of $V$. After renaming $w_{i,1}$ and $w_{i,2}$ so that $w_{i,1}$ is always an initial segment of $V$, we get that $\{w_{i,j}\}_{i\in\mathbb{N},1\le j\le2}$ is a proper spacer rank-two construction of $V$.
\end{proof}

We now show that the system determined by a Sturmian sequence cannot be (spacer) rank-one, which together with Theorem~\ref{thm:Stu} will show that all Sturmian systems are spacer rank-two. We note that it was known that a measurable dynamical system for some Sturmian sequences is not rank one (\cite[Proposition 5]{Ch00}, \cite[Theorem 3]{FH}, \cite[Chapter 6]{Fo02}).

\begin{prop}
Let $(X, \sigma)$ denote the shift system for a  Sturmian sequence. Then $(X, \sigma)$ is not rank one.
\end{prop}
\begin{proof}
Let $W$ denote the Sturmian sequence associated with $X$ and first assume it is type $0$. Since all Sturmian sequences are balanced, between any two $1$'s there can be either $n$ or $n + 1$ $0$'s. Let $V$ be a rank-one word such that $X_V = X$. We know both $V$ and $W$ are recurrent. Since any subword of $V$ is a subword of $W$, $V$ must satisfy the same properties that the sequence is balanced. Let $V$ start with $0^k 1$ with $0 < k \le n + 1$. Suppose $V$ is a rank-one word built by some word $p$ which contains $0^k 1$ and ends with a $0$. If $k = n + 1$, then $V$ cannot contain two $p$'s in a row since otherwise, there would be $n + 2$ zeroes between any two ones, contradicting the fact that $V$ is balanced. Thus, $V$ must be of the form $p1p1p1p\dots$, contradicting nonperiodicity of $W$. If $p$ ends with $10^{n + 1}$, then the next symbol in $W$ must be a $1$ then followed by a $0$. Thus, $V$ is of the form $p1p1p1\dots$, which obviously cannot be the case since $W$ is not periodic. If $p$ ends with $10^n$ and the next digit is a $1$, then $V$ contains the sequence $10^n10^k1$ which cannot happen if $k < n$, but if $k = n$ only possibly occurs if $10^{n + 1}10^{n + 1}1$ does not occur. If $n = 1$, then $10^n10^n1$ can't happen by the hypothesis of the theorem. If $n > 1$, then $pp$ can't occur in $V$ since otherwise, there would be $2n > n + 1$ zeroes in a row. Finally, if $p$ ends with less than $n$ zeroes then $V$ must be of the form $ppppp\dots$, contradicting nonperiodicity of Sturmian sequences. 

In the case of $n = 1$, we show via induction that such a $p$ must be of the form $01010101\cdots10$. Note that both $pp$ and $p1p$ must appear in $V$ since otherwise there would be periodicity. In addition, by the argument above, $p$ must begin with $01$ and since $pp$ appears, it must end with $10$. Since $p1p$ appears, $10101$ appears and therefore since $V$ is a Sturmian sequence, $1001001$ cannot appear. Thus, $p$ must end with $01010$ so $1010101$ appears in the Sturmian sequence. Now suppose $10^{n}$ appears in the Sturmian sequence where the length of $(10)^{n}$ is less than that of $p$. Then $100(10)^{n - 2}1001$ cannot appear in the sequence and since $pp$ appears, $p$ must end with $(10)^n$ and since $p1p$ appears, the Sturmian sequence contains $(10)^{n + 1}$. By induction, $p$ must be of the form $010101\cdots10$. Since $V$ is rank-one, this would mean that the first $|p|$ letters of $V$ are $p$ for infinitely many $p$ of the form $01010\dots10$ so $V$ would be periodic. This is a contradiction.

We next consider the case when $W$ is type 1. Suppose there exists a rank-one $V$ such that $X_V = X$. As in the previous case, we argue that between two zeroes in $V$ and $W$, only $1^n$ and $1^{n + 1}$ appear for some $n \ge 1$. In addition, as before, both $V$ and $W$ must be balanced sequences that cannot be eventually periodic (making $v$ a Sturmian sequence as well). Suppose $p$ builds $V$. Note that $p$ must begin and end in $0$. Both $p1^np$ and $p1^{n + 1}p$ must appear in $v$ because otherwise, $v$ would be periodic. 

If $p$ begins with $01^n$ and ends with $1^{n + 1}0$, then since $p1^np$ and $p1^{n + 1}p$ appears in $V$, $1^{n + 1}01^{n + 1}$ and $01^n01^n0$ appears in $V$, contradicting the fact that $V$ is balanced. By a similar argument, if $p$ begins with $01^{n + 1}$ and ends with $1^n0$, the word $01^n01^n0$ and $1^{n + 1}01^{n + 1}$ must appear in $v$, contradicting the fact that $v$ is balanced. \\\\
If $p$ starts with $01^n$ and ends with $1^n0$, since $p1^np$ and $p1^{n + 1}p$ both appear in $v$, $01^n01^{n + 1}01^n0$ and $01^n01^n1^n0$ both appear in $v$. We show via induction that $p$ must be of the form $01^n01^n \dots 1^n0$. If $p$ starts with $01^n01^{n + 1}$, then $1^{n + 1}01^n01^{n + 1}$ and $01^n01^n01^n0$ appear in $v$, contradicting balancedness of $v$. Thus, $p$ must begin with $01^n01^n0$. Now suppose $p$ begins with $(01^n)^k0$. If $p$ begins with $(01^n)^k01^{n + 1}0$, then since $p1^np$ and $p1^{n + 1}p$ both appear in $v$, the string $1^{n + 1}(01^n)^k01^{n + 1}$ and $(01^n)^{k + 2}0$ both appear in $v$. They are both strings of length $(n + 1)(k + 2) + 1$ but the latter has $n(k + 2)$ $1$'s while the former has $n(k + 2) + 2$ $1$'s, a contradiction to $v$ being balanced. Since $v$ is rank-one, it must be built with arbitrarily many $p$, so it must begin with $(01^n)^k$ for $k$ arbitrarily large: i.e. it must be periodic. This is a contradiction. 

If $p$ starts with $01^{n + 1}$ and ends with $1^{n + 1}0$, we argue similarly that $p$ must be of the form 
$$01^{n + 1}01^{n + 1}\dots 1^{n + 1}0.$$ 
Suppose $p$ begins with $(01^{n + 1})^k01^n0$. Then since $V$ contains $p1^np$ and $p1^{n + 1}p$, $V$ must contain the string $1^{n + 1}01^{n + 1}(01^{n + 1})^k$ and must contain $01^n(01^{n + 1})^k01^n0$, both of which are length $(n + 2)(k + 1) + n + 1$ but the former has $(n + 1)(k + 3)$ $1$'s while the latter has $(n + 1)(k + 3) - 2$ $1$'s, a contradiction to $V$ being balanced. Hence, $V$ begins with $(01^{n + 1})^k$ for $k$ arbitrarily large, so it is periodic. This is a contradiction. 
\end{proof}

\begin{cor} \label{C:sturmr2} If $V\in\{0,1\}^\mathbb{N}$ is a Sturmian sequence, then $(X_V,\sigma)$ is a spacer rank-two subshift.
\end{cor}

\begin{proof} If $v$ is an initial segment of $V$ and $W$ is such that $V=vW$, then $X_V=X_W$ since $V$ is recurrent, and thus $W$ is also Sturmian. Thus without loss of generality we may assume that $V$ starts with $0$. By Theorem~\ref{thm:Stu}, $X_V$ is a spacer rank-two subshift.
\end{proof}

\begin{example} \rm{
The Fibonacci substitution $0 \mapsto 01$ and $1 \mapsto 0$ does not contain $10101$ and is a type $0$ Sturmian sequence. Hence, it is not rank one, and since it is a substitution sequence, it must be spacer rank-two. We know it defines a spacer rank-two system.
}\end{example}

\section{Characterizations and Additional Examples of Spacer Rank-Two Systems}\label{S:charrank2}

In this section we give conditions for a symbolic subshift to be spacer rank-two, a classification for spacer rank-two systems (Theorem \ref{thm:rank two builds rank one}), and  some examples, including a class that generalizes the Morse sequence and spacer rank-two Sturmian sequences. In this context we note that del Junco showed in \cite{Ju77} that the finite measure-preserving Morse transformation is rank-two (as a measure-preserving system), which implies that the Morse system is rank-two, but our proof is independent of del Junco's and applies to a larger class.


We note that when dealing with an infinite or bi-infinite word that is built from a finite word $v$, an occurrence of $v$ in the word at position $i$ does not necessarily imply that there is an occurrence of $v1^a$ for some $a \geq 0$ ending at position $i-1$. For example, consider a rank-one word $V$ defined as
\begin{align*}
v_0 &= 0,\\
v_1 &= 00100,\\
  v_{n+1} &= v_n 1 v_n 11 v_n \ \text{ for } n > 0,
\end{align*}
and the system $(X_V, \sigma)$ associated to $V$. Suppose we see an occurrence of $v_1$ in a bi-infinite word $x = \cdots \underline{0}0100 \cdots$ at the $0^\text{th}$ position for some $x \in X_V$. Knowing that $v_1$ builds $x$, one might think that $v_1$ at the $0^\text{th}$ position must be preceded by the word $v_1 1^a$, for some $a \geq 0$, but it is possible that the occurrence of $v_1$ at the $0^\text{th}$ position is in fact part of an occurrence of $v_2$:
$$\cdots 00100 \ 1 \ 00100 \ 11 \ 001\underline{0}0 \ 1 \ 00100 \ 1  \ 00100 \ 11 \ 00100 \cdots$$


As in \cite{GaHi16AT,GaZi18,Ka84}, given an infinite word $V$, if there exists a unique decomposition of $V$ of the form
$$V = v 1^{a_1} v 1^{a_2} v 1^{a_3} v 1^{a_4} v \cdots$$
such that $a_i \geq 0$ for all $i \in \N^+$, we say each occurrence of $v$ shown above is an \textbf{expected occurrence}. Similarly, for any bi-infinite word $x$, if there exists a unique decomposition of $x$ to the form
$$x = \cdots v 1^{a_{-2}} v 1^{a_{-1}} v 1^{a_0} v 1^{a_1} v 1^{a_2} v \cdots$$
such that $a_i \geq 0$ for all $i \in \mathbb{Z}$, then each occurrence of $v$ shown above is an \textbf{expected occurrence}.

Kalikow showed in \cite{Ka84} that whether an occurrence of $v$ is expected in a bi-infinite word can be resolved uniquely for  aperiodic bi-infinite words.

\begin{lem}[Kalikow \cite{Ka84}] \label{kalikow}
Given a bi-infinite word that is built from a finite word $v$, if the entire word is aperiodic, then there is a unique way to decompose the word into expected occurrences of $v$'s separated by $1$'s.
\end{lem}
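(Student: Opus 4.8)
The plan is to argue by contradiction, showing that two distinct decompositions force $x$ to be periodic. First note that a decomposition of the bi-infinite word $x$ into copies of $v$ separated by runs of $1$'s is completely determined by the set $S \subseteq \mathbb{Z}$ of positions at which a copy of $v$ begins: once $S$ is specified, the blocks $x_{(s)}\cdots x_{(s+|v|-1)}$ for $s \in S$ are pinned down and every remaining position must carry the symbol $1$. Hence two distinct decompositions $D_1, D_2$ correspond to two distinct sets $S_1 \neq S_2$, and after interchanging their roles if necessary we may fix a position $p \in S_1 \setminus S_2$. (If $|v| = 1$, i.e. $v = 0$, no such $p$ can arise, so $S_1 = S_2$ and there is nothing to prove; thus assume $|v| \geq 2$.)

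The first step is to extract an overlap. Since $v$ begins with $0$ we have $x_{(p)} = 0$, and because $p \notin S_2$ the position $p$ cannot lie in a run of $1$'s of $D_2$; therefore it lies inside a copy of $v$ of $D_2$ beginning at some $q \in S_2$ with $q < p \leq q + |v| - 1$. Writing $d = p - q$ we get $1 \leq d \leq |v| - 1$, and the two copies $x_{(q)}\cdots x_{(q+|v|-1)} = v$ and $x_{(p)}\cdots x_{(p+|v|-1)} = v$ genuinely overlap. Comparing the two expressions for $x$ on the overlap $[p,\, q+|v|-1]$ yields $v_{(m)} = v_{(m+d)}$ for all $1 \leq m \leq |v| - d$; that is, $d$ is a period of the word $v$, and consequently $x$ is $d$-periodic on the interval $[q,\, q+|v|+d-1]$.

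The second and decisive step is to propagate this local periodicity across all of $\mathbb{Z}$, which contradicts the assumed aperiodicity of $x$ and completes the proof. The approach is to track the two decompositions simultaneously to the right of the overlap, using that each copy of $v$ begins and ends with $0$ while the separating runs are pure $1$'s: one tries to show that the block boundaries of $D_1$ and $D_2$ remain offset by exactly $d$, so that $D_2$ carries a copy of $v$ at $p_1 - d$ whenever $D_1$ carries one at the next block position $p_1$ (and the intervening runs of $1$'s match up), and then to run the mirror-image argument to the left, extending the $d$-periodic interval in both directions until it exhausts $\mathbb{Z}$. The hard part is precisely this propagation. When a separating run $1^{a_i}$ is longer than $d$, the periodic window established so far stops short of the following block, and one must rule out that the two parsings ``resynchronize'' with a different offset somewhere inside a long run of $1$'s; controlling this requires a careful bookkeeping of the block boundaries relative to the isolated $0$'s of $x$. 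Establishing this step---equivalently, the recognizability of the rank-one parse for aperiodic $x$---is the heart of the matter and is the content of Kalikow's argument in \cite{Ka84}.
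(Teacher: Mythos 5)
The paper itself gives no proof of this lemma: it is stated as a quoted result of Kalikow, with the reference \cite{Ka84}, so there is no internal argument to compare against. Judged as a standalone proof, your attempt has a genuine gap, and you say so yourself: the entire content of the lemma is the propagation/recognizability step, and your write-up stops exactly there, deferring that step back to Kalikow's paper. Your first step is correct but cheap --- two distinct parsings produce two overlapping copies of $v$ at some offset $d$ with $1 \le d \le |v|-1$, so $d$ is a period of the finite word $v$ and $x$ is $d$-periodic on one window of length $|v|+d$. A local period on a single window contradicts nothing; aperiodicity of $x$ is a global statement, so at the point where your argument ends, nothing about the lemma has actually been established. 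A proof whose acknowledged ``hard part'' is replaced by a citation is a restatement of the lemma, not a proof of it.

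The gap is fillable, and more cleanly than by the boundary-chasing you propose. Since $v$ begins and ends with $0$ and separators consist only of $1$'s, encode $x$ by the bi-infinite sequence $(h_i)_{i \in \mathbb{Z}}$ of lengths of the $1$-runs between consecutive $0$'s; let $z$ be the number of $0$'s in $v$ and $(g_1, \dots, g_{z-1})$ the internal $1$-run pattern of $v$. Every $0$ of $x$ must lie in some block, and consecutive blocks consume $z$ consecutive $0$'s, so a decomposition is exactly a choice of residue class $r \bmod z$ (in the indexing of the $0$'s) for the block-starting $0$'s, subject to $h_{j+m} = g_{m+1}$ for all $j \equiv r \pmod z$ and $0 \le m \le z-2$; only the gaps with index $\equiv r-1 \pmod z$ are free separator lengths. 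If two distinct residues $r_1 \neq r_2$ both admit decompositions, then the excluded classes $r_1 - 1$ and $r_2 - 1$ are distinct mod $z$, so every index $i$ is constrained by at least one of the two parsings, forcing $h_i$ to equal a value $g_m$ that depends only on $i \bmod z$. Hence $(h_i)$ is $z$-periodic, so $x$ is periodic, contradicting aperiodicity. Some argument of this kind (or a correctly bookkept version of your offset-tracking plan) is what your proposal needs in order to be complete.
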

Note that for any infinite word that is built from a finite word $v$, the decomposition of the word into expected occurrences is unique because the first occurrence of $v$ has to be an expected occurrence. We also point out that the case which the word is periodic and thus cannot be decomposed uniquely into expected occurrences of $v$ is trivial, since the word has to be at most (spacer) rank-one.

\begin{lem}
Let $V$ be an infinite word. If $V$ starts with $0$ and is periodic, then $V$ is rank-one.
\end{lem}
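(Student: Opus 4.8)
The plan is to verify Definition \ref{3.def1} directly, by exhibiting infinitely many words $v \in \mathcal{F}$ from which $V$ is built; this shows $A_V$ is infinite and hence that $V$ is rank-one. The starting point is that, since $V$ starts with $0$ and is periodic, it has infinitely many occurrences of $0$, so I can write $V = 0\,1^{a_1}\,0\,1^{a_2}\,0\,1^{a_3}\cdots$, where $a_i \geq 0$ records the number of consecutive $1$'s between the $i$-th and $(i+1)$-th occurrence of $0$. This already recovers $0 \in A_V$; the real task is to manufacture arbitrarily long elements of $A_V$.

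First I would show that the gap sequence $(a_i)_{i \geq 1}$ is itself purely periodic. Let $p$ be a period of $V$ and let $q \geq 1$ be the number of $0$'s appearing in each window of length $p$. Writing the positions of the $0$'s as $n_1 < n_2 < \cdots$ with $n_1 = 1$, periodicity of $V$ forces $n_{i+q} = n_i + p$ for all $i$, and therefore $a_{i+q} = n_{i+q+1} - n_{i+q} - 1 = n_{i+1} - n_i - 1 = a_i$. Thus $(a_i)$ has period $q$.

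Next, for each $k \geq 1$ I would define $v^{(k)}$ to be the prefix of $V$ ending at its $(kq)$-th $0$, namely $v^{(k)} = 0\,1^{a_1}\,0\,1^{a_2}\cdots 0\,1^{a_{kq-1}}\,0$. Each $v^{(k)}$ begins and ends with $0$, so $v^{(k)} \in \mathcal{F}$, and $|v^{(k)}| \geq kq \to \infty$, so these words are pairwise distinct. To see $V$ is built from $v^{(k)}$, group the $0$'s of $V$ into consecutive blocks of $kq$: the $m$-th block occupies the $0$'s $n_{mkq+1}, \ldots, n_{(m+1)kq}$ with internal gaps $a_{mkq+1}, \ldots, a_{(m+1)kq-1}$, and since $kq$ is a multiple of $q$, periodicity gives $a_{mkq+j} = a_j$ for $1 \leq j \leq kq-1$, so each block reads exactly as $v^{(k)}$. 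Between the $m$-th and $(m+1)$-th block there is a run of $1$'s of length $a_{(m+1)kq} = a_q$, so $V = v^{(k)}\,1^{a_q}\,v^{(k)}\,1^{a_q}\cdots$. Hence $v^{(k)} \in A_V$ for every $k$, $A_V$ is infinite, and $V$ is rank-one.

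The only genuinely delicate points are establishing the pure periodicity of $(a_i)$, which rests on the bookkeeping $n_{i+q} = n_i + p$, and checking that the blocks of $kq$ zeros tile $V$ exactly as copies of $v^{(k)}$ separated by runs of $1$'s with nothing left over at the seams; these are where I expect the main care to be needed. The degenerate all-zeros case $V = 000\cdots$ is swept up automatically, with $q = 1$, all $a_i = 0$, and $v^{(k)} = 0^k$.
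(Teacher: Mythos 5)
Your proposal is correct and takes essentially the same approach as the paper: both arguments produce infinitely many elements of $A_V$ by taking the prefixes of $V$ that end at the last $0$ of each successive block of full periods (your $v^{(k)}$, ending at the $(kq)$-th zero, coincides with the paper's $v_n$, ending at the last $0$ within the first $nk$ digits). The only difference is that where the paper simply asserts that each such prefix ``clearly'' builds $V$, you verify this step explicitly via the periodicity of the gap sequence $(a_i)$, which fills in detail rather than changing the argument.
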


\begin{proof}
Let $k$ be the period of $V$. Since $V$ starts with $0$, the first digit of $V$, $V_{(1)} = 0$. Define
\begin{center}
\begin{itemize}
    \item[] $v_{0,1} = 0$
    \item[] $v_{n,1} = V{(1)} V{(2)} \cdots V{(a_n)} \text{ for } n > 0$
\end{itemize}
\end{center}
where $a_n$ is the position of the last $0$ within the first $nk$ digits of $V$. Clearly, each $v_{i,1}$ builds $v_{i+1,1}$, and 
$V \upharpoonright |v_{i,1}| = v_{i,1}$. Then $V$ is rank-one.
\end{proof}

\begin{cor}
Let $V$ be an infinite word. If $V$ is periodic and $V \neq 111\cdots{}$, then $X_V$ is rank-one.
\end{cor}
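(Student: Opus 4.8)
The plan is to reduce the statement to the preceding lemma (``any periodic infinite word starting with $0$ is rank-one'') by replacing $V$ with a suitable tail, and then to transfer the conclusion back to $V$ via Corollary \ref{shift infinite word}. Since $V$ is periodic, say with period $p$, and $V \neq 111\cdots$, the word $V$ must contain at least one symbol $0$. By periodicity this $0$ recurs at every index congruent to it modulo $p$, so in particular there is some index $i \in \mathbb{N}$ with $V_{(i)} = 0$.

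First I would form the tail word $W = V_{(i)} V_{(i+1)} V_{(i+2)} \cdots$. By construction $W$ begins with $0$, and it remains periodic: for every $k$ we have $W_{(k)} = V_{(i+k-1)} = V_{(i+k-1+p)} = W_{(k+p)}$, so $W$ is periodic with period dividing $p$. Applying the preceding lemma to $W$ then shows that $W$ is a rank-one word, and hence $(X_W,\sigma)$ is a rank-one system by definition.

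Finally I would invoke Corollary \ref{shift infinite word} to move back to $V$: taking the index $i$ for $V$ and $j=1$ for $W$, the hypothesis $V_{(i)} V_{(i+1)} \cdots = W_{(1)} W_{(2)} \cdots$ holds by the very definition of $W$, so $X_V = X_W$. Since $X_W$ is rank-one, it follows that $X_V$ is rank-one, which is the desired conclusion.

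The main point is conceptual rather than computational. One must use the convention (as in the Chac\'on discussion earlier in this section) that a system is \emph{rank-one} when it coincides with $X_W$ for \emph{some} rank-one word $W$, not when the particular generating word $V$ happens to be rank-one. This distinction is essential here, because $V$ itself need not be rank-one: for instance $V = 1010\cdots$ is periodic with $V \neq 111\cdots$ yet is not built from any word of $\mathcal{F}$, so $A_V = \varnothing$ and $V$ is not rank-one, even though $X_V = X_W$ for the rank-one word $W = 0101\cdots$. Thus the force of the argument is precisely that rank-one-ness of the system is a property of the system, witnessed here by the tail $W$ rather than by $V$.
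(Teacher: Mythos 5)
Your proof is correct and follows essentially the same route as the paper's: pass to the tail of $V$ beginning at an occurrence of $0$, apply the preceding lemma to conclude that tail is a rank-one word, and then use Corollary \ref{shift infinite word} to conclude $X_V$ equals the rank-one system of the tail. The additional remark that $V$ itself need not be rank-one (e.g.\ $V = 1010\cdots$) is a nice clarification of why the detour through the shifted word is needed, but the underlying argument is the paper's own.
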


\begin{proof}
The corollary follows from {Corollary \ref{shift infinite word}} which shows that infinite words up to shifts have the same rank-one system. Given $V$ periodic and $V \neq 111\cdots{}$, we can remove the leading digits of $V$ up to the first $0$ to get an infinite periodic word that starts with $0$.
\end{proof}
We will need  the following lemma.
 
\begin{lem}[Gao--Hill \cite{GaHi16AT}]
Suppose $V$ is a rank-one word and $(X, \sigma)$ is the rank-one system associated to $V$. If $x \in X$ contains an occurrence of $0$, then $x$ contains an occurrence of every finite subword of $V$.
\end{lem}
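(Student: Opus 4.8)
The plan is to reduce the lemma to the single claim that $x$ contains an occurrence of $v_n$ for every $n$, where $(v_n)$ is a generating sequence for $V$ as in Definition \ref{3.def2}. This reduction is immediate from the generating-sequence picture: since $V \upharpoonright |v_n| = v_n$, each $v_n$ is a prefix of $V$, so any finite sub-word $w$ of $V$, which occurs at some fixed position in $V$, is already a sub-word of the prefix $v_N$ as soon as $|v_N|$ exceeds the right endpoint of that occurrence. Because $|v_n| \to \infty$, such an $N$ exists, and transitivity of the sub-word relation then gives that $x$ contains $w$ once it contains $v_N$. Thus it suffices to produce $v_n$ inside $x$ for each fixed $n$.

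To do this I would work directly from the given $0$ in $x$, with no induction on $n$. Assume without loss of generality that $x_{(0)} = 0$, fix $n$, choose a radius $L \geq |v_n|$, and consider the finite sub-word $u = x_{(-L)} \cdots x_{(L)}$. Since $x \in X$, the word $u$ is a sub-word of $V$; I fix one occurrence of $u$ in $V$ and let $p$ be the position in $V$ corresponding to the central symbol $x_{(0)}$, so that $V_{(p)} = 0$. The structural input is that, in the canonical decomposition $V = v_n 1^{a_1} v_n 1^{a_2} v_n \cdots$ coming from the generating sequence, every $0$ of $V$ lies inside one of the copies of $v_n$, because the gaps between consecutive copies consist only of $1$'s. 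Hence $V_{(p)}$ sits inside a copy of $v_n$ occupying a contiguous block $[c, c+|v_n|-1]$ with $c \le p \le c+|v_n|-1$.

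Finally I would note that both $p - c$ and $(c+|v_n|-1) - p$ are at most $|v_n| - 1 \le L$, so the window $[p-L, p+L]$ realizing $u$ inside $V$ entirely contains the block $[c, c+|v_n|-1]$, regardless of how the occurrence happened to be aligned. Therefore $u$ contains $V_{(c)} \cdots V_{(c+|v_n|-1)} = v_n$ as a sub-word, and since $u$ is a sub-word of $x$, so is $v_n$. Combined with the reduction above, this shows $x$ contains every finite sub-word of $V$.

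The one point requiring care, and the step I expect to be the main obstacle, is precisely this alignment in the last paragraph: the realized occurrence of $x_{(0)}$ inside $V$ can fall anywhere within its $v_n$-block, so the window radius must be taken large enough ($L \geq |v_n|$) to guarantee capturing the whole block no matter where the central $0$ lands. I would emphasize that the argument invokes only the canonical decomposition of the fixed word $V$ itself, not any decomposition of $x$, which is exactly what allows it to sidestep the subtleties about expected occurrences discussed earlier in this section.
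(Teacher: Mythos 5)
Your proof is correct, and it takes a genuinely different route from what the paper does. Note that the paper never proves this statement itself (it is quoted from Gao and Hill); what it proves is the rank-$n$ generalization, Lemma \ref{every subword of W is a subword of x}, and that proof works inside $x$: the given occurrence of $0$ is asserted to be part of an occurrence of some level-$l$ generating word, and occurrences of level-$l$ words are asserted to sit inside occurrences of higher-level words, until one reaches a level whose word contains the target sub-word $u$. That nesting claim about occurrences in a bi-infinite word is exactly the delicate point --- such occurrences need not be ``expected'' ones --- and the paper does not justify it. Your argument never decomposes $x$ at all: you take a window of radius $L \ge |v_n|$ around the $0$, realize that window somewhere inside $V$, and then use only the \emph{existence} (not uniqueness, so Kalikow's lemma is never invoked) of a decomposition $V = v_n 1^{a_1} v_n 1^{a_2} v_n \cdots$ to conclude that the central $0$ lies in a complete $v_n$-block, which the window must capture because the block has length $|v_n| \le L$. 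Combined with your reduction that every finite sub-word of $V$ lies in some prefix $v_N$ (valid since the $v_n$ are prefixes of $V$ with $|v_n| \to \infty$), this yields the lemma. What your route buys is rigor and economy: a fully elementary argument with the alignment subtlety made explicit, relying only on the structure of the fixed word $V$. What the paper's route aims for --- locating actual occurrences of the generating words inside arbitrary elements of $X$ --- is stronger structural information about the shift space, but it is precisely the step left unjustified there, so your version is the more solid proof of this particular statement.
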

We extend this lemma to systems of spacer rank $n$ for any $n$.
\begin{lem} \label{every subword of W is a subword of x}
Let $W$ be an infinite word that is spacer rank-$n$ and let $(X_W, \sigma)$ be the system associated to $W$. For any $x \in X_W$, if $x \neq \cdots 11\underline{1}11 \cdots$, then every finite subword of $W$ is a subword of $x$.
\end{lem}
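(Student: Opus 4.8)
The plan is to imitate the rank-one lemma of Gao and Hill stated just above, exploiting the hierarchical block structure furnished by Definition \ref{At Most Rank n Word}. After applying a shift I may assume $x_{(0)} = 0$, which is legitimate because $x \neq \cdots 11\underline{1}11\cdots$ forces $x$ to contain a $0$. Fix a finite sub-word $u$ of $W$. Since $W = \lim_{i} v_{i,1}$ with $W \upharpoonright |v_{i,1}| = v_{i,1}$ and $|v_{i,1}| \to \infty$, the word $u$ occurs at some finite position of $W$ and is therefore a sub-word of $v_{N,1}$ once $N$ is large enough; fix such an $N$.

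First I would record the composition property that ``built from'' is transitive for the generating sets: for every $M > N$ the word $v_{M,1}$ is built from $\{v_{N,1}, \dots, v_{N,n}\}$, so it factors as $v_{M,1} = b_1 1^{c_1} b_2 1^{c_2} \cdots 1^{c_{r-1}} b_r$ with each $b_s \in \{v_{N,1}, \dots, v_{N,n}\}$. Because the separators are runs of $1$'s, \emph{every} $0$ of $v_{M,1}$ lies inside a unique level-$N$ block $b_s$; this is the exact analogue of the fact that in a rank-one word every $0$ sits in the unique level-$N$ sub-block. I would also note $\min_\ell |v_{N,\ell}| \to \infty$, since each $v_{i+1,j}$ is built from at least two blocks and so lengths at least double from one level to the next.

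The core is the window-capture step. Set $L = \max_\ell |v_{N,\ell}|$ and consider the window $\Omega = x_{(-L)} x_{(-L+1)} \cdots x_{(L)}$. As a finite sub-word of $x$ it is a sub-word of $W$, and being finite it is a sub-word of $v_{M,1}$ for all sufficiently large $M$. Locating an occurrence of $\Omega$ inside such a $v_{M,1}$ and tracking the image $c$ of the central coordinate $x_{(0)} = 0$, the previous paragraph puts $c$ inside some level-$N$ block $b_s = v_{N,\ell}$ of the factorization. Since $c$ sits at distance $L \ge |b_s|$ from each endpoint of the occurrence, the whole block $b_s$ lies within it, so $v_{N,\ell}$ is a sub-word of $\Omega$ and hence of $x$.

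The final step, upgrading ``$x$ contains \emph{some} level-$N$ block $v_{N,\ell}$'' to ``$x$ contains $v_{N,1}$,'' is where I expect the real difficulty, since a block $v_{N,\ell}$ with $\ell \neq 1$ need not contain $u$ at all. The way to close the gap is to run the capture step at a level $M'$ far above $N$, thereby placing an entire high-level block $v_{M',\ell'}$ inside $x$, and then to invoke \emph{primitivity} of the generating scheme: when every $v_{m+1,j}$ is built using all of $v_{m,1}, \dots, v_{m,n}$, a downward induction shows every sufficiently high-level block contains $v_{N,1}$, whence $v_{M',\ell'} \supseteq v_{N,1} \supseteq u$ and $u$ is a sub-word of $x$. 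For the Thue--Morse generating sequence recalled above this primitivity is immediate, as each of $M_{n+1,1}$ and $M_{n+1,2}$ is built from both $M_{n,1}$ and $M_{n,2}$; the point to watch carefully is precisely that, absent such a primitivity feature, the capture step alone yields only some level-$N$ block and not necessarily the prefix block $v_{N,1}$ carrying $u$.
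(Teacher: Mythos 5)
Your window-capture argument (shift so $x_{(0)}=0$, take the window of radius $L=\max_\ell |v_{N,\ell}|$, locate it inside $v_{M,1}$ for large $M$, and observe that the level-$N$ block containing the image of the central $0$ is trapped inside the window) is correct, and it is in fact more careful than what the paper does: the paper compresses this entire step into the bare assertion that every occurrence of $0$ in $x$ ``is a part of an occurrence of $w_{l,1}$.'' Note the index $1$ there --- the paper claims the captured block is the \emph{first} block of its level, which is exactly the point you isolated as the crux, and it offers no justification for it; it then finishes the same way you propose to, via the nesting $w_{k,1}\subseteq w_{\max(l,k),1}$ coming from the prefix property. So your proposal and the paper's proof share the same skeleton, and they share the same hole: the capture step only ever yields \emph{some} $v_{N,\ell}$, not $v_{N,1}$.

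The gap is genuine, and you should know it cannot be closed under Definition \ref{At Most Rank n Word} as written: the lemma is actually false without your primitivity hypothesis. Take $n=2$, $v_{1,1}=0110$, $v_{1,2}=00$ (both built from $\{0\}$), and for $i\ge 1$ set $v_{i+1,1}=v_{i,1}v_{i,2}$ and $v_{i+1,2}=v_{i,2}v_{i,2}$; this is legal since Definition \ref{At Most Rank n Word} only requires each word to be a concatenation of elements \emph{of} the set $\{v_{i,1},v_{i,2}\}$ separated by (possibly empty) runs of $1$'s, not that every element of the set be used. Then $W=\lim_i v_{i,1}$ is $0110$ followed by all zeroes, so $W$ is at most rank-two; the all-zero bi-infinite word $x$ lies in $X_W$ (each $0^m$ occurs in $W$) and contains a $0$, yet the sub-word $11$ of $W$ never occurs in $x$. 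A less degenerate variant, $v_{1,2}=010$, gives $W=0110\,010\,010\cdots$ and the periodic point $x=\cdots 010\,010\cdots\in X_W$, which misses $0110$. So your instinct was exactly right: one must assume every $v_{m+1,j}$ is built using all of $v_{m,1},\dots,v_{m,n}$, and under that hypothesis your downward induction ($v_{M',\ell'}\supseteq v_{M'-1,1}\supseteq\cdots\supseteq v_{N,1}\supseteq u$) completes a correct proof of the corrected statement. The paper's proof silently assumes the same missing hypothesis; its later application in Theorem \ref{X_M not rank-one} survives only because the Thue--Morse generating sets (and the $n=1$ case) happen to be primitive in your sense.
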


\begin{proof}
Since $W$ is at most spacer rank-$n$ there exists an infinite sequence
$$\{w_{i, 1}, w_{i, 2}, \cdots, w_{i, n} \}_{i \in \mathbb{N}}$$
of sets of finite words such that $W$ is built from $\{w_{i, 1}, w_{i, 2}, \cdots, w_{i, n} \}$ for any $i$. For any finite subword $u$ of $W$, $u$ is a subword of $w_{k,1}$ for some $k \geq 0$. If $x \neq \cdots 11\underline{1}11 \cdots$, then $x$ contains an occurrence of $0$. Every occurrence of $0$ in $x$ is a part of an occurrence of $w_{l,1}$ for some $l \geq 0$. Since both $w_{l,1}$ and $w_{k,1}$ has to be an occurrence of $w_{\max(l, k),1}$, if $x$ contains an occurrence of $0$, then $x$ contains an occurrence of $w_{\max(m+1, k),1}$. Therefore, $x$ contains an occurrence of $u$.
\end{proof}


The following lemma will be used to prove that certain systems are spacer rank-two.


\begin{lem}\label{lem:unique-decomposition-rank-2}
Suppose $V$ has a spacer rank-$n$ construction built by $V_{k,i}$ for all $k$ and $1 \leq i \leq n$ and there exists some finite word $v$ that builds $x \in X_V$. 
Suppose there exist two distinct ways to decompose $V_{k,1}$ for all $k > N$ for some $N$ where the decomposition is into three words $a, b_k, c_k$ or $a' ,b'_k, c'_k$ with the following properies

\begin{enumerate}
\item  $V_{k,1} = a b_k c_k = a' b'_k c'_k$, 
\item $|a| < v$, $|a'| < v$, $|c_k| < v$, $|c'_k|< v$, 
\item $b_k$ and $b_k'$ are built from $v$,
\item there exits finite words $d_k, d'_k, e_k, e'_k$ such that $d_k a = v$, $d'_ka' = v$, $c_ke_k = v$ and $c'_k e'_k = v$.
\end{enumerate}
Then there exists $x \in X_V$ that is periodic.
\end{lem}

\begin{proof}
Suppose there exist two distinct ways to decompose $V_{k, 1}$ for every $k > N$ for some integer $N$ satisfying the conditions stated in the lemma. Consider the following two sequences of bi-infinite words $\{y_i\}_{i \in \mathbb{N}}$ and $\{z_i\}_{i \in \mathbb{N}}$:
\begin{itemize}
    \item[] $y_i = \cdots vvv \ d_k V_{k, 1} e_k \ vvv \cdots$ such that $V_{k, 1}$ starts at position $-k$ for $k = i + N$
    \item[] $z_i = \cdots vvv \ d'_k V_{k, 1} e'_k \ vvv \cdots$ such that $V_{k, 1}$ starts at position $-k$ for $k = i + N$
\end{itemize}
From the compactness of $\{0, 1\}^\mathbb{Z}$, we know that there exists $S \subset \mathbb{N}$ such that $\{y_i\}_{i \in S}$ is a convergent sub-sequence of $\{y_i\}_{i \in \mathbb{N}}$. Let $y$ be the limit of $\{y_i\}_{i \in S}$. Note that $y \in X_V$ since every finite subword of $y$ is a subword of $M_{k, 1}$ for any $k$ and thus is a subword of $V$. For any $i$, both $z_i$ and $y_i$ have $V_{k, 1}$ at the same position around $0$, with $k$ increasing as $i$ increases. Thus,
$$\lim_{i\rightarrow\infty} d(z_i, y_i) = 0$$
so $\{z_i\}_{i \in S}$ also converges to the same limit $y$. However, the positions of expected occurrences of $v$ in $z_i$ and $y_i$ for every $i$ are different, so there exist two distinct ways of decomposing $y$ into expected occurrences of $v$. Thus, by {Lemma \ref{kalikow}}, $y$ must be periodic.
\end{proof}
For clarity in proving that $X_V$ is not rank one and to avoid excessive notations, for any finite words $a$ and $b$, we denote an occurrence of $a$ separated by some number (which could be zero) of $1$'s as $a1^*b$.

Note that Theorem~\ref{T:rank2sub} only shows that the words that are fixed points of proper constant-length substitutions are spacer rank-two, and that it does not show that the systems generated by those words are spacer rank-two. 
Along those lines, we have the following theorem which does indeed prove that some systems are spacer 
rank-two as long as they are generated by words that satisfy some generalization of the properties satisfied by the Morse sequence. 

\begin{thm}\label{thm:TM-rank-2-extension}
Suppose that $W$ has a spacer rank-two construction such that $\displaystyle W=\lim_{k \to \infty} w_{k,1} $, and $W$ is built from $\{w_{k,1},w_{k,2}\}$ for all $k$. If for all $k$, the $w_{k,i}$ satisfy the following properties:

\begin{enumerate}
\item There exists $M \in \N$ such that the length of the longest prefix or suffix shared by $w_{k,1}$ and $w_{k,2}$ is less than $M$ for all $k$. 
\item There exists $p$ such that $W$ is free of all $p$-powers
\end{enumerate}
then $(X_W,\sigma)$ is not a (spacer) 
rank-one system, and so $(X_W,\sigma)$ is a spacer 
rank-two system.
\end{thm}

\begin{proof}
Suppose that $(X_W,\sigma)$ is a (spacer) 
rank-one system. 
Then there exists some rank-one word $V$ such that $X_V = X_W$. 
Let $v$ be any finite subword of $V$ that builds $V$. 
We can ensure that $|v|$ is sufficiently high such that it does not build $W$ because there are only a finite number of words that build $W$ by themselves since $W$ is spacer rank-two. 
For any $x \in X_W$, we will have that $x$ is not periodic because we stipulated that $W$ is free of all $p^\text{th}$ powers, so there is some maximum number of times a subword can be repeated. 
By Lemma \ref{every subword of W is a subword of x}, we have that every finite subword of $V$ is a finite subword of $x$ and every finite subword of $W$ is a finite subword of $x$. 
So we can break $x$ up into expected occurrences of $v$ separated by $1$s or for each $k$, into expected occurrences of $W_{k,1}$ and $W_{k,2}$ with spacers. 
Since every subword of $W$ is a subword of $x$, there are infinitely many occurrences of subwords of $x$ of the form $W_{k,1}1^* W_{k,1}$ and $W_{k,2}1^*W_{k,1}$ for each $k$. 
Because $W$ is of spacer rank two and $|v|$ can be made large enough that it does not fit in $W_{k,1}$, we have that no expected occurrence of $W_{k,1}$ in $x$ begins with an expected occurrence of $v$.

By Lemma \ref{lem:unique-decomposition-rank-2}, we have that since $x$ is aperiodic we have a unique decomposition $W_{k,1} = abc$ where $|a|, |c| <|v|$, $b$ is built from $v$ and there exists $d, e$ such thaat $d a = v$ and $c e = v$. We will let the end of $W_{k,2}$ be $f$. 
Then we have that $v = c1^*a = f1^*a$ because of the placement of $v$ in $W_{k,1}1^* W_{k,1}$ and $W_{k,2}1^*W_{k,1}$. 
Since $W_{k,2}$ ends with $0$, we have that $f$ ends with $0$, so $f = c$ which has some length bounded by $M$ by the assumptions we have about shared prefixes and suffixes of $W_{k,1}$ and $W_{k,2}$.

Now, we have either that every expected occurrence of $w_{k,2}1^*w_{k,2}$ starts and ends with $v$ or there is a $k$ such that some expected occurrence of some $w_{k,2}1^*w_{k,2}$ does not start and end with $v$.
In the first case, consider any expected occurrence of \[w_{k,1}1^*w_{k,2}1^*w_{k,2}1^*w_{k,2}1^* \cdots 1^*w_{k,1}\] with $p-1$ copies of $w_{k,2}$ where $p$ is the lowest bound on $p^\text{th}$ powers in $W$. 
Then the first $w_{k,2}$ starts with $v$ and the second ends with $v$. 
Since $W$ has no $p^\text{th}$ powers for some $p$, there do not exist $p$ expected  occurrences of $W_{k,2}$ in a row, so we must have that there is a copy of $w_{k,2}1^*w_{k,2}$ folllowed by $w_{k,1}$. 
But this means that that copy of $w_{k,1}$ starts with an expected occurrence of $v$, so we have that $c1^*d = v$ which implies that $a = d$ which bounds the length of $a$ and $d$ above by $M$.

Similarly, if there exists and expected occurrence of $w_{k,2}1^*w_{k,2}$ that does not start and end with $v$, then there is an expected occurrence of $w_{k,2}1^*w_{k,2}$ such that $f1^*d = v$ so $a = d$ which also bounds above their length by $M$. 
So $v = a1^*c$, which means we can bound above the length of $v$ by $2M + p$ which means there cannot be infinitely many words $v$ building $V$ which contradicts that $V$ is rank-one. 
So we cannot have that $(X_V,\sigma)$ is a rank-one system which means it is a spacer rank-two system.
\end{proof}

\begin{cor}
The shift system defined by the Morse sequence is spacer rank-two.
\end{cor}
\begin{proof} One can verify by induction that in the definition of the Morse sequence, for any $i \in \mathbb{N}$, the longest subword that both $M_{i,1}$ and $M_{i,2}$ start with is $0$, and the longest subword that both $M_{i,1}$ and $M_{i,2}$ end with is also $0$, and it is well-known that the Morse sequence has no cubes. 
\end{proof}

In Theorem \ref{thm:rank two builds rank one} and Theorem \ref{thm:class-rank-two} we provide a classification of which words with a spacer rank-two construction generate rank-one systems, and which spacer rank-two words generate spacer rank-two systems. 
 

We use the following lemma from \cite{GaHi16AT} . 

\begin{lem}\label{lem:two-sided-words-built-by-v}
If $V$ is built by $v$, then each $x \in X_V$ is built by $v$.
\end{lem}

The proof relies on the concept of expected occurrences of $v$ in $V$ and in $x$. 
Using Lemma \ref{lem:two-sided-words-built-by-v}, we can begin our classification of which spacer rank-two words build rank-one systems.

\begin{thm}\label{thm:rank two builds rank one}
If $V$ be a word with a spacer rank-two construction with levels $\{v_{n,1}, v_{n,2}\}$,  such that  the first word in the construction $\{v_{n,1}, $  appears only once in $V$, for some $n>1$,  
then $(X_V, \sigma)$ is a rank-one system.
\end{thm}

\begin{proof}
Suppose first that $(X_V, \sigma)$ is a spacer rank-two system and that $V$ is built by $\{P_n, w_n\}$ with $\displaystyle \lim_{n \to \infty} P_n = V$ and $P_n$ only having one expected appearance (or finitely many times, it is equivalent) in $V$.  
By our definition of spacer rank-two words, we have that $P_{n+1}$ is built by $P_n$ and $w_n$, and $w_{n+1}$ is built by $P_n$ and $w_n$. 
Note that $P_{n+1}$ cannot be built by only $P_n$ since $V$ is not a rank-one word, so we in fact have that $w_{n+1}$ is built only by $w_n$, as otherwise $P_n$ would appear infinitely often . 
The sequence $(w_n)_{n \geq 1}$ is a generating sequence for some rank-one word we will call $W = \displaystyle \lim_{n \to \infty} w_n$. 
We claim that $X_V = X_W$, so $X_V$ would be a rank-one system.

By Corollary \ref{cor:equivalent-systems} we know that $X_V = X_W$ if and only if $V$ and $W$ have the same subwords that appear infinitely often. 
Suppose $u$ appears infinitely often in $W$. 
Then there exists some $n$ such the $u$ is a subword of $w_n$, so $u$ appears infinitely often in $V$ because $w_n$ does.
Conversely, suppose $u$ appears infinitely often in $V$. 
Then $u$ must be a subword of $w_n$ for some $n$ because for any $n$, there will be copies of $u$ outside of $P_n$, and we have that $|w_n| \to \infty$, so for a large enough $n$, $u$ must be a subword of $w_n$. 
Then we have that $u$ appears infinitely often in $W$. 
Thus $X_V = X_W$, which means that $X_V$ is a rank-one system, a contradiction. 

\end{proof}

\begin{thm}\label{thm:class-rank-two}
Let $V$ be a spacer rank-two word such that for all spacer rank-two constructions of $V$, the first word in the construction appears infinitely many times. 
Then $(X_V, \sigma)$ is a spacer rank-two system.
\end{thm}

\begin{proof}
Suppose now that $V$ is a spacer rank-two word such that every spacer rank-two construction of $V$ by $\{v_{n,1}, v_{n,2}\}$, $\{v_{n,1}$ appears infinitely many times. For the sake of contradiction suppose that the system $X_V$ is rank-one. 
Then there exists some rank-one word $W$, such that $X_W = X_V$. 

Let us define the word $\overline{V}$ by $\overline{V}{(i)} = \begin{cases} V{(i)} & i \geq 0 \\ 1 & i < 0 \end{cases}$, 
and consider the sets \[S_n = \{ \sigma^k(\overline{V}) : \sigma^k(\overline{V}) \upharpoonright |v_{1,n} |= v_{1,n} \}.\] 

Since $\{0,1\}^\Z$ is compact, $S_n$ must have a convergent subsequence for each $n$. 
Pick a convergent subsequence of $S_n$ and say that it converges to $x_n$, which we one can verify is in $X_V$. 

Now, consider the sequence $x_n$ and note that each $x_n$ looks something like $\cdots .v_{1,n} \cdots$ with the $v_{1,n}$ starting at $0$. 
Since  $X_V$ is compact we must have that there is a convergent subsequence of the $x_n$ which converges to some element of $X_V$ that we call $x \in X_V$. 
The $x_n$  have $v_{1,k_n}$ as their first $|v_{1,k_n}|$ digits, so $x = \cdots .V$ with $V$ starting at $0$. 
It follows that we have a word in   $X_V$ that looks like $\cdots .V$.

By Lemma \ref{lem:two-sided-words-built-by-v} the word $x$ must be built by any $w$ building $W$ since $X_V = X_W$. 
So we have that $x = \cdots .V$ must be built by $w$ which tells us that some shift of $V$ is built by $w$. 
We do not know that the $w$'s building $x$ have to have some $w$ starting at the $0^\text{th}$ position, but we do have to have that the $w$'s continue on infinitely to the right because otherwise $V$ would have only finitely many $0$'s, which means it is not a rank-one word.

So we have that there exists some $\ell$ such that $\sigma^\ell(V)$ is built by $w$. 
This means that for any such $w$  we have that $V$ is built in a way that looks like \[V=P_w w 1^{a_1} w1^{a_2} \ldots.\]

We show that this extends to form a spacer rank-two construction that is not proper.
 We   take $w_n$ to be a generating sequence for $W$  and $P_n = P_{w_n}$. Then $w_n$ and  $P_n$  build $w_{n+1}$ and $P_{n+1}$.  We consider  three cases. 
First, all the first copies of $w_n$ start at $0$, so $P_{w_n} = \epsilon$ is empty and $V = W$. 
This is clearly a contradiction, so there must be some $w_n$ not starting at $0$. The second case is that there exists $ N$ such that all the first copies of $w_n$ start at or before $N$. 
If this is the case, then there must be some $k$ such that infinitely many of the $w_n$ start at $k$. 
We can take this subsequence of of the $w_n$ and call it $u_n$. Then we have that $P$ and $u_n$ build $V$ for some fixed $P$. 
Since we want $P \to V$ as $n \to \infty$ we instead define $P'_n = Pu_n$ which will tend to $V$ and we do indeed have $P'_{n+1} = Pu_{n+1}$ is built from $P'_n = Pu_n$ and $u_n$ because $u_{n+1}$ is built from $u_n$. 
So this extends into a complete spacer rank-two construction for $V$ where the first term in the construction appears only once. 
Finally, we have that case where there is no $N$ such that all the first copies of $w_n$ start before $N$. 
This means we can take a subsequence $u_n$ of $w_n$ such that the starting locations of the first $w_n$ in their respective construction of $V$ increase monotonically. 
So the lengths of $P$ are increasing and we do in fact have that $P \to V$ so by taking this subsequence, we have a complete spacer rank-two construction for $V$ where the first term in the construction appears only once, so not a proper construction, a contradiction. It follows that $X_V$ cannot be  a rank-one system, and since $V$ is a word with a spacer rank-two construction it means that $(X_V, \sigma)$ is a rank-two system.

\end{proof}

We have a classification of when a spacer rank-two word will generate a rank-one system and when it will generate a spacer rank-two system. 
We also have a small extension of this result that applies the results to spacer rank-$n$ words, whose proof follows immediately from the same proof as the above theorem.

\begin{prop}
Let $V$ be a spacer rank-$n$ word with $n > 1$ such that for all spacer rank-$n$ constructions of $V$, the first word in the construction appears infinitely many times. 
Then $(X_V, \sigma)$ is not a rank-one system.
\end{prop}


It is known that any rank-one system is either periodic, minimal, or has a single fixed point and is minimal upon removing that fixed point \cite[Proposition 2.4]{GaHi16AT}, so  a rank-one system has at most two orbit closures. In 
our final example we construct a system $X_V$ that has at least four orbit closures. It would be interesting to know an upper bound on the number of orbit closures depending on the spacer rank.
 
\begin{example} \rm{
We construct a proper spacer rank-two word defining a spacer rank-two system that is not minimal and has four orbit closures. 
Consider the word  $V =\displaystyle \lim_{n \to \infty} v_{n,1}$ defined by 
\begin{align*}
v_{0,1} &= 0, v_{0,2}=0,\\
 v_{n,1} &= v_{n-1,1}1^{2^n} v_{n-1,2} \\ 
 v_{n,2}&=v_{n-1,2}1v_{n-1,2}.
\end{align*} 
The word $V$ starts as follows \[V = 010110101111010101011111111010101010101010\cdots\] 
We claim  that the system $(X_V, \sigma)$ is not minimal. 
In fact, it has at least four different orbit closures. 
This also implies that $(X_V, \sigma)$ is a spacer rank-two system. 
Therefore $V$ is a spacer rank-two system as it has a word with a spacer rank-two construction that generates it. This is another method showing that the word $V$ is actually not a rank-one word.

Because $v_{n,2} = (01)^{2^n-1}0$ we have that $\ldots 010101\ldots \in X_V$ and because we have $1^{2^n}$ for spacers when building $v_n$ we have that $\ldots 1111 \ldots \in X_V$. 
The first of these two is periodic with period two and the second is a fixed point. 
Outside this we show $X_V$ has at least two orbit closures.
Consider the points $x_1 = \ldots 01010.11111\cdots$ and $x_2 = \ldots 11111.10101\ldots$. 
We claim that both of these are in $X_V$ and have different orbit closures.

First, note that $x_1 \in X_V$ because \[v_{n+1,1} = v_{n,1} 1^{2^n} v_{n,2} = v_{n-1,1} 1^{2^{n-1}} v_{n-1,2} 1^{2^n} v_{n,2}.\] 
So we have that for all $n$, $(01)^{2^{n-1}-1} 01^{2^n}$ is a subword of $V$ which means that $x_1    \in X_V,\text{ as is }\sigma(x_1) = \ldots 10101.11111\ldots \in X_V.$
Similarly, we have that $1^{2^n} v_{n,2}$ is a subword of $V$ for all $n$, so $1^{2^n}(01)^{2^n-1} 0$ is a subword of $V$ for all $n$. 
This means that $x_2  \in X_V$.

Now we must show they have different orbit closures. 
It is sufficient to show that $x_1$ is not in the orbit closure of $x_2$. 
Suppose $x_1  $ is in the orbit closure of $x_2  $. 
Then for all $\varepsilon > 0$, there exists some $z$ in the orbit of $x_2$ such that $d(x_1,z) < \varepsilon$. 
Taking $\varepsilon < 1/2^n$, this means that $x_1$ agrees with some shift of $x_2$ on the middle $-n$-{th} through $n$-{th} digits. 
However, $x_1$ has $01011$ as it's middle $5$ digits, whereas any shift of $x_2$, will either have $11111$, $11110$, $11101$, $11010$, $10101$, or $01010$ as its middle five digits. 
None of these agree with the middle $5$ digits of $x_1$, so no shift of $x_2$ can be closer than $1/2^5$ to $x_1$.
 So $x_1$ cannot be in the orbit closure of $x_2$. So $X_V$ has two different infinite orbit closures.
}\end{example}

\bibliography{bibliographyR2-V5}
\bibliographystyle{plain}

\bigskip\bigskip

\end{document}